\documentclass[12pt]{article}

\usepackage{float}
\usepackage{amsmath, amssymb, amsthm}
\usepackage{mathtools}
\usepackage[margin=1in]{geometry}
\usepackage{hyperref}
\usepackage{enumitem}
\usepackage{comment}
\usepackage{graphicx}
\usepackage{titlesec}
\usepackage[utf8]{inputenc}
\usepackage{authblk}
\usepackage{booktabs}
\usepackage{subcaption}
\usepackage{caption}
\usepackage[
  backend=biber,
  style=numeric-comp,
  sorting=nyt,
  giveninits=true,
  maxnames=99,
  minnames=1,
  doi=false,
  url=false,
  isbn=false,
  sortcites=true
]{biblatex}

\DeclareNameAlias{default}{family-given}

\DeclareFieldFormat[article, incollection, inproceedings, phdthesis, book]{title}{#1}
\DeclareFieldFormat{booktitle}{#1}

\renewbibmacro*{in:}{}

\DeclareFieldFormat{journaltitle}{#1}

\DeclareFieldFormat[article]{volume}{#1}
\DeclareFieldFormat[article]{number}{(#1)}

\DeclareFieldFormat[article]{pages}{pp.\addspace#1}

\renewbibmacro*{date}{\printtext[parens]{\printfield{year}}}

\DeclareBibliographyDriver{article}{%
  \printnames{author}%
  \addcolon\space
  \printfield{title}%
  \addperiod\space
  \printfield{journaltitle}%
  \addcomma\space
  \printfield{volume}%
  \printfield{number}%
  \addcomma\space
  \printfield{pages}%
  \addcomma\space
  \printtext[parens]{\printfield{year}}%
  \addperiod
  \finentry
}

\DeclareBibliographyDriver{phdthesis}{%
  \printnames{author}%
  \addcolon\space
  \printfield{title}%
  \addperiod\space
  \printfield{type}%
  \addcomma\space
  \printfield{institution}%
  \addcomma\space
  \printtext[parens]{\printfield{year}}%
  \addperiod
  \finentry
}

\titleformat{\section}
  {\normalfont\large\bfseries} 
  {\thesection}                
  {1em}                        
  {}

\titleformat{\subsection}
  {\normalfont\large\bfseries}          
  {\thesubsection}             
  {1em}                        
  {}

\titleformat{\subsubsection}
  {\normalfont\normalsize}          
  {\thesubsubsection}             
  {1em}                        
  {}

\newtheoremstyle{normalhead_nobrackets}
  {3pt}        
  {3pt}        
  {\itshape}   
  {}           
  {\bfseries}
  {.}          
  {.5em}       
  {\thmname{#1}\thmnumber{ #2}} 

\theoremstyle{normalhead_nobrackets}

\newtheorem{theorem}{Theorem}[section]
\newtheorem{definition}{Definition}[section]

\newtheorem{corollary}[theorem]{Corollary}
\newtheorem{proposition}[theorem]{Proposition}

\newtheoremstyle{exampstyle}
  {3pt}           
  {3pt}           
  {\normalfont}   
  {}              
  {\bfseries}     
  {.}             
  {.5em}          
  {}              

\theoremstyle{exampstyle}
\newtheorem{example}{Example}[section]

\renewenvironment{abstract}
 {\begin{center}
    \textbf{Abstract} 
  \end{center}
  \list{}{\listparindent 1.5em%
  \itemindent    \listparindent
  \leftmargin    0pt%
  \rightmargin   0pt%
  \parsep        0pt plus 1pt}%
  \item\relax}
 {\endlist}

\title{\textbf{Combinatorial metaplexes and centrality indices for identifying higher-order interactions}}

\author[1]{Hiren J. Dhameliya}
\author[2]{Udit Raj}
\author[1*]{Sudeepto Bhattacharya}

\affil[1]{\footnotesize{Department of Mathematics, School of Natural Sciences, Shiv Nadar Institution of Eminence (Deemed to be University), Tehsil: Dadri, Gautam Buddha Nagar, 201314, Uttar Pradesh, India.}}

\affil[2]{\footnotesize{Department of Electrical Engineering, Indian Institute of Technology, Kanpur, 208016, Uttar Pradesh, India.}}

\affil[ ]{{\footnotesize{$^*$ Corresponding author: sudeepto.bhattacharya@snu.edu.in}}}

\date{}

\begin{document}

\maketitle

\begin{abstract}
\noindent Complex systems consist of interacting units whose interactions may be pairwise, involving two units, or higher-order, involving more than two units simultaneously. Graphs capture pairwise interactions and represent such systems as networks, whereas simplicial complexes can capture higher-order interactions (HoIs) and represent them as higher-order networks comprising simplices. In the clique complex construction, HoIs arise whenever vertices form a clique in the underlying graph.
In classical graph-theoretic and simplicial-complex models, vertices are treated as structurally indistinguishable objects. However, in many real-world systems vertices possess internal structure, and their intrinsic properties influence the HoIs present in the system.
To address this limitation, we introduce the combinatorial metaplex, consisting of two interacting components: an underlying simplicial complex that serves as an admissibility structure specifying boundary-compatible higher-order simplex candidates, and a concentration layer defined by a concentration map assigning a value to each vertex and extending the map to simplices so that the resulting distribution satisfies a conservation relation between vertex weights and facet weights. This concentration layer provides a deterministic threshold rule governing the inclusion of true HoIs. 
Using facet-mediated adjacency and weighted walks, we define one-parameter families of degree, closeness, and harmonic centralities for non-facet simplices, interpolating between those determined solely by the simplicial complex and those determined by concentration-induced coupling.
The framework is illustrated through a representative example, including a comparison of HoIs obtained from the clique complex and the combinatorial metaplex, followed by an analysis of edge centralities within the combinatorial metaplex.
\\

\noindent
\textbf{Keywords:} Combinatorial metaplex, Simplicial complex, Higher-order networks, Facet-mediated adjacency, Weighted walks, Centrality indices.
\\
\textbf{MSC:} 05E45; 55U10; 05C82.
\end{abstract}

\newpage
\section{Introduction}
In the study of complex systems, which are characterised by many interacting units, understanding the structure of interactions among these units is fundamental. Such systems often exhibit collective phenomena such as nonlinearity, emergence, and adaptive behaviour \cite{estrada2024complex, raj2024structure, zhao2021simplicial, boccaletti2006complex}. When interactions occur pairwise, these systems are often modelled using graph-theoretic representations, leading to complex networks in which units are represented as vertices and their interactions as edges \cite{newman2018networks, estrada2015first, upadhyay2019network, battiston2021physics, Bianconi_2021}. Examples of such networks include protein–protein interaction networks, brain networks, and ecological networks \cite{newman2018networks}.

However, many complex systems exhibit interactions among the units of group size greater than two, in addition to pairwise interactions. Such interactions are referred to as higher-order interactions (HoIs) \cite{alvarez2021evolutionary, battiston2020networks, boccaletti2023structure, battiston2022higher}. To represent HoIs explicitly, more general combinatorial structures such as hypergraphs and simplicial complexes are commonly employed \cite{salnikov2019simplicial, aktas2019persistence, torres2020simplicial}. A hypergraph consists of a finite set of vertices together with a collection of subsets of these vertices, called hyperedges. Unlike a graph, where each edge contains exactly two vertices, a hyperedge may contain any number of vertices. A simplicial complex is a collection of subsets of a vertex set that is closed under inclusion: whenever a set belongs to the collection, all of its subsets also belong to it. The elements of a simplicial complex are called simplices, and a simplex containing $k+1$ vertices is called a $k$-simplex. In this representation, HoIs are modelled as simplices (such as triangles, tetrahedra, and higher-dimensional analogues), with the closure property ensuring that all lower-dimensional faces of an interaction are also present \cite{kozlov2008combinatorial}.

In many existing studies, mathematical representations such as graphs, hypergraphs, and simplicial complexes are widely used to model the interaction structure of complex systems \cite{boccaletti2023structure,battiston2021physics,battiston2020networks}. However, these models primarily describe the structure of interactions and disregard intrinsic properties of the individual units forming the system. In many real-world systems, the internal properties of units affect the formation of HoIs \cite{miranda2023simplicial}.

The metaplex framework, introduced by Estrada and collaborators, unifies internal structure and interaction structure within a single representation \cite{estrada2020metaplex, miranda2023simplicial}. In this setting, each vertex carries an internal structure that supports a continuous dynamical process, while interactions between vertices are modelled by a network. This enables the study of diffusion processes that evolve both within components and across their connections.

In some systems, however, intrinsic properties of units are discrete and quantifiable rather than continuous \cite{bairey2016high, badalyan2024structure, nakajima2025inference, contisciani2025flexible}. Vertices may carry measurable attributes such as the 
$h$-index or citation count of an author in a co-authorship network, the population of a city in a transportation network, or the abundance of a species in an ecological corridor network \cite{singh2020node, shanu2019graph, contisciani2025flexible}. These heterogeneous quantities influence interaction structure but are not generated by internal dynamical processes \cite{miranda2023simplicial}. Studies on vertex-weighted networks demonstrate that such intrinsic vertex weights affect structural measures including centrality and flow \cite{singh2020node}.

Many higher-order network models employ abstract simplicial complexes, in which higher-dimensional simplices arise purely from combinatorial inclusion \cite{zhao2021simplicial}. In particular, in clique-based constructions every complete subgraph is interpreted as a HoI, regardless of vertex attributes \cite{raj2024structure, giusti2016two}. Consequently, HoIs emerge solely from pairwise closure, without incorporating the intrinsic properties of the participating units.

This motivates the introduction of the combinatorial metaplex. In this framework, each vertex is endowed with a combinatorial structure representing the internal structure of the corresponding unit. A concentration is assigned to each vertex as a scalar quantity representing an intrinsic quantitative property of the unit. These vertex concentrations constitute the fundamental numerical quantities in the model.

Unlike multilayer network models \cite{boccaletti2014structure,kivela2014multilayer}, which extend networks outward through multiple layers and interlayer edges connecting vertices across these layers, the combinatorial metaplex introduces vertex-level heterogeneity by assigning internal structure to individual units.

Within this vertex-driven setting, higher-dimensional simplices are not assigned primary weights. Instead, their weights are induced from the concentrations of their constituent vertices through the simplicial inclusion structure, implemented via the fractional weight distribution principle \cite{batagelj2020fractional,perianes2016constructing}. Because a vertex may participate in multiple simplices, its concentration is distributed among incident simplices (that is, simplices containing that vertex) according to this inclusion relation. Consequently, any numerical value associated with a simplex arises solely from aggregated vertex contributions, and no independent weight is assigned at the simplex level. This distinguishes the combinatorial metaplex from weighted simplicial complexes (see, e.g., \cite{sharma2017weighted,carstens2013persistent,baccini2022weighted}), where a non-negative real-value is assigned directly to simplices, thereby prescribing numerical strengths to HoIs.

In the combinatorial metaplex, the inclusion of a higher-order simplex is governed by a structural–quantitative condition. A simplex is included only if the aggregated concentration of its boundary simplices exceeds a prescribed threshold. Thus, simplices arise from two simultaneous requirements: combinatorial admissibility and sufficient inherited concentration. This ensures that realised true HoIs are supported by intrinsic vertex-level properties rather than appearing solely through combinatorial closure.

Finally, we introduce centrality measures on combinatorial metaplexes—including degree, closeness, and harmonic centralities—to analyse how intrinsic vertex-level properties influence the structure of HoIs.

A simplicial complex of dimension $1$ (or the $1$-skeleton of a simplicial complex $\Delta$) is a graph, and various centrality indices for pairwise networks have been introduced in the literature \cite{upadhyay2017network, upadhyay2019network, opsahla2010node, scardoni2012centralities}. The most basic centrality, degree centrality, counts the number of incident edges to a vertex (that is, edges containing that vertex). In an edge-weighted graph, the strength of a vertex is defined as the aggregate weight of its incident edges.  In classical graph-theoretic formulations, centrality measures have primarily focused on the weighted edge-based adjacency structure. The incorporation of vertex weights into centrality definitions has been explored in \cite{abbasi2013hybrid, singh2020node, akanmu2014weighted, kapoor2013weighted}. Our work extends these constructions by defining centrality measures for fully weighted graphs, in which both vertices and edges are assigned weights, thereby providing a framework that couples intrinsic vertex properties with adjacency-based network structure.

Centrality measures in higher-order networks quantify the importance of simplices within the interaction structure. Extending graph-based centralities to simplicial complexes requires an appropriate notion of adjacency between simplices. The adjacency between two distinct $k$-simplices has been defined in terms of upper adjacency (when there exists a $(k+1)$-simplex containing both) and lower adjacency (when there exists a $(k-1)$-simplex contained in both) \cite{goldberg2002}. A generalised notion of adjacency, in which two distinct $k$-simplices are lower adjacent but not upper adjacent, was introduced in \cite{estrada2018}. The adjacency between arbitrary simplices was defined in \cite{serrano2020, serrano2020simplicial}.

The generalised degree centrality in the $d$-exclusive simplicial complex of $\Delta$ (i.e., the family of $d$-simplices of $\Delta$ and their faces) was defined in \cite{courtney2016generalized} and subsequently extended to betweenness and closeness centralities in \cite{raj2023}. Using a tensorial representation of the $d$-exclusive simplicial complex, the strength of a simplex in a weighted simplicial complex was defined in \cite{courtney2017weighted}. In defining centralities for the combinatorial metaplex, we adopt the facet-based definition of adjacency proposed in \cite{raj2025study}, in which two distinct $k$-simplices are said to be adjacent whenever they are contained in a common facet.

The main contributions of this work are as follows: (1) We introduce the combinatorial metaplex, a two-layered construction that couples intrinsic vertex-level data with higher-order simplicial structure. (2) We extend the vertex concentration map to the entire simplicial complex via a fractional contribution map, propagating intrinsic vertex properties to higher-dimensional simplices through the incidence structure. (3) We establish a conservation relation between vertex concentrations and maximal higher-order simplices: the total induced weight of all facets equals the total concentration assigned to the $0$-simplices. (4) We develop a concentration-based framework for constructing simplicial complexes from pairwise data. Unlike clique-based constructions, which consider every clique of size greater than two as a HoI, our approach identifies only those simplices whose aggregated boundary concentration exceeds a prescribed threshold, thereby detecting concentration-supported HoIs. (5) We define degree centrality for fully weighted graphs consistent with previous definitions in weighted graphs. (6) We introduce degree, closeness, and harmonic centralities for the combinatorial metaplex using a facet-mediated notion of adjacency and weighted walks. (7) We illustrate the proposed framework using a real-world network example. This example demonstrates the construction of the combinatorial metaplex and shows how the concentration-based inclusion rule identifies true HoIs, in contrast to the purely combinatorial HoIs obtained from the clique complex.

Section \ref{1sec:preliminaries} presents the standard definitions from combinatorial algebraic topology required for completeness. These notions are well established in the literature \cite{bondy2008graph, hatcher_algebraic_2001, kozlov2008combinatorial, raj2023, raj2024structure}.

\section{Preliminaries}\label{1sec:preliminaries}

Let \( V = \{v_0, v_1, \ldots, v_n\} \) be a finite, non-empty set of vertices, where \( n \in \mathbb{N} \). We denote the power set of $V$ as $\mathcal{P}(V)$.

\begin{definition}
A graph $G$ is a three-tuple \( G = (V, E, \psi_G) \), where:
\begin{enumerate}
    \item[(i)] \( V \) is the set of vertices,
    \item[(ii)] \( E \) is the set of edges,
    \item[(iii)] \( \psi_G : E \rightarrow V \cup [V]^2 \) is the incidence map that assigns to each edge either one or two vertices (its endpoints), where
    \( [V]^2 = \{ X \in \mathcal{P}(V) \mid |X| = 2 \} \).
\end{enumerate}
\end{definition}

\begin{definition}
A simple graph is a graph \( G = (V, E) \), where \( E \subseteq [V]^2 \). That is, each edge is a two-element subset of \(V\); in particular, there are no loops or multiple edges.
\end{definition}

Throughout this work, unless otherwise specified, the term graph refers exclusively to a simple graph.

\begin{definition}
Let $G = (V,E)$ be a graph and let $Z \subseteq V$. 
The induced subgraph of $G$ on $Z$, denoted by $G[Z]$, 
is the graph whose vertex set is $Z$ and whose edge set is
\[
E_Z = \big\{ \{u,v\} \in E \;\big|\; u,v \in Z \big\}.
\]
That is, $G[Z]$ contains exactly those edges of $G$ whose endpoints both lie in $Z$.
\end{definition}

\begin{definition}
A simplex over \( V \) is a non-empty subset \( \sigma \subseteq V \).
If the cardinality \( |\sigma| = d + 1 \), the dimension of \( \sigma \) is defined as
\[
dim(\sigma) = |\sigma| - 1 = d.
\]
In this case, \( \sigma \) is called a \textit{\( d \)-simplex}, and is often denoted as \( \sigma^{(d)} \).
\end{definition}

\begin{definition}
Let $\sigma^{(q)}$ be a $q$-simplex over a vertex set $V$. The \text{boundary} of $\sigma^{(q)}$, denoted $\mathcal{B}(\sigma^{(q)})$, is the family of all $(q-1)$-dimensional faces of $\sigma^{(q)}$.

Formally, we define
\[
\mathcal{B}(\sigma^{(q)}) =
\left\{ \tau \subsetneq \sigma^{(q)} \mid dim(\tau)=q-1 \right\}.
\]
\end{definition}

\begin{definition}
If \( \sigma^{(p)} \subseteq \sigma^{(q)} \), then \( \sigma^{(p)} \) is called a \( p \)-dimensional face of \( \sigma^{(q)} \). If \( p < q \), then $\sigma^{(p)}$ is called a proper face.
\end{definition}

\begin{definition}
A collection \( \Delta \) of simplices is called an abstract simplicial complex if it satisfies the condition:
\[
\forall \sigma \in \Delta,\text{ and } (\tau \in \mathcal{P}(V), \text{ if } \tau \subseteq \sigma \Rightarrow \tau \in \Delta).
\]
That is, every face of a simplex in \( \Delta \) must also belong to \( \Delta \).
The vertex set of $\Delta$ is denoted $V(\Delta) = \{ v_i \in V \mid \{v_i\} \in \Delta \}$.
\end{definition}
In this terminology, a $0$-simplex is a vertex, a $1$-simplex is an edge, a $2$-simplex is a triangle, a $3$-simplex is a tetrahedron, and, in general, a $k$-simplex consists of $k+1$ vertices. Throughout this work, the term simplicial complex refers to an abstract simplicial complex.

\begin{definition}
The dimension of a simplicial complex is the maximum number $q \in \mathbb{N}$ such that $\Delta$ contains a $q$-simplex:
\[
dim(\Delta) = \max\{q \in \mathbb{N} \mid \sigma^{(q)} \in \Delta\}.
\]
\end{definition}
 
\begin{definition}
The collection of all simplices in \( \Delta \) of dimension at most \( d \) is called the \( d \)-skeleton of \( \Delta \), and is denoted by:
\[
\Delta^{(d)} = \{\sigma \in \Delta \mid dim(\sigma) \le d\}.
\]
\end{definition}
The 1-skeleton of a simplicial complex \( \Delta \) is the graph formed by its 0- and 1-simplices. The vertices of the graph correspond to the 0-simplices, and an edge exists between two vertices if and only if the corresponding 1-simplex belongs to the 1-skeleton of \( \Delta \).

\begin{definition}
A facet of a simplicial complex $\Delta$ is a simplex that is not contained in any other simplex of $\Delta$. Formally, a simplex $\gamma \in \Delta$ is a facet if
\[
\nexists \tau \in \Delta \setminus \{\gamma\} \text{ such that } \gamma \subseteq \tau
\]
\end{definition}

\begin{definition}
The set of $q$-simplices of $\Delta$ is denoted by $S_q$:
\[
S_q = \{ \sigma \in \Delta \mid dim(\sigma) = q \} \quad \text{with} \quad |S_q| = n_q.
\]
We denote an enumeration of all $q$-dimensional simplices of $\Delta$ as:
\[
S_q = \{ \sigma_1^{(q)}, \sigma_2^{(q)}, \ldots, \sigma_{n_q}^{(q)} \}.
\]
\end{definition}

Next, we define the upper degree following the framework in \cite{goldberg2002}.
\begin{definition}
Let $0 \le q < dim(\Delta)$. The upper degree of a $q$-simplex $\sigma_i^{(q)}$ is defined as the number of $(q+1)$-simplices containing it:
\[
k_{\sigma_i^{(q)}} =
\left|
\left\{
\sigma^{(q+1)} \in S_{q+1} \mid
\sigma_i^{(q)} \subsetneq \sigma^{(q+1)}
\right\}
\right|.
\]
\end{definition}

\begin{definition}
A graph $G = (V,E)$ is said to be complete if
\[
E(G) 
= 
\big\{ \{u,v\} \subseteq V \;\big|\; u \neq v \big\}.
\]
\end{definition}

\begin{definition}
Let $G = (V,E)$ be a graph. 
The clique complex of $G$, denoted by $C(G)$, 
is the abstract simplicial complex defined by
\[
C(G) 
= 
\big\{ \sigma \subseteq V \;\big|\; G[\sigma] \text{ is a complete graph} \big\}.
\]
\end{definition}

\begin{definition}
A metaplex is a $4$-tuple
\[
\mathcal{X} = (V, E, \mathcal{I}, \omega),
\]
where $(V, E)$ is a graph,
\[
\omega = \{\Omega_j\}_{j=1}^k
\]
is a set of locally compact metric spaces $\Omega_j$ with Borel measures $\mu_j$, and
\[
\mathcal{I} : V \to \omega
\]
is an assignment map \cite{estrada2020metaplex}.
\end{definition}

\section{The combinatorial metaplex and the weight map}\label{1sec:CM framework}

\subsection{The combinatorial metaplex}

\bigskip

As mentioned previously, the metaplex framework introduced in \cite{estrada2020metaplex} models the internal vertex structure using locally compact metric–measure spaces. These spaces naturally support continuous dynamics, such as diffusion. However, in many systems, the internal structure of units is inherently discrete and does not admit a meaningful continuous description. In such cases, replacing internal metric–measure spaces with a finite combinatorial structure provides a natural generalisation. To model such systems, we introduce the combinatorial metaplex framework. In this framework, each vertex is associated with a finite combinatorial internal structure endowed with a discrete measure, which is formalised through a concentration map defined on the set of vertices.
\begin{definition}
A combinatorial metaplex (CM) is a quintuple
\begin{equation}\label{1eq:CM}
     \mathcal{K}_c = (V, \Delta, \mathcal{Y}, \mathcal{W}, \mathcal{I}),
\end{equation}
where:
\begin{enumerate}
     \item[(i)] $V = \{v_0, v_1, \dots, v_n\}$ is a finite, non-empty set of vertices.
     \item[(ii)] $\Delta$ is a finite simplicial complex over $V$.
     \item[(iii)] $\mathcal{Y} = \{Y_i\}_{i=0}^n$ is a family of finite, non-empty sets indexed by the vertices.
     \item[(iv)] $\mathcal{W} = \{w_i\}_{i=0}^n$ is a collection of non-negative weight maps
     \[
     w_i : Y_i \to \mathbb{Q} \setminus \mathbb{Q}^-,
     \]
     such that $w_i$ is not identically zero, i.e.,
     \[
         \sum_{x \in Y_i} w_i(x) > 0 \quad \text{for all } 0 \le i \le n.
     \]
    
    \item[(v)] $\mathcal{I} : V \to \mathcal{Y}$ is an assignment map defined by
    \[
        \mathcal{I}(v_i) = Y_i \quad \text{for all } 0 \le i \le n.
    \]
\end{enumerate}

Each $w_i$ induces a discrete measure $\mu_i$ on the power set $\mathcal{P}(Y_i)$, such that for any subset $A \subseteq Y_i$,
\begin{equation*}
    \mu_i(A) = \sum_{x \in A} w_i(x).
\end{equation*}
Thus, each triple $(Y_i, \mathcal{P}(Y_i), \mu_i)$ defines a discrete measure space associated uniquely with the vertex $v_i \in V$ \cite{rudin1987real}.
\end{definition}

The {concentration} at a vertex $v \in V$ is defined as the discrete measure of its internal structure:

\begin{definition}
Given a CM as in \eqref{1eq:CM}, for a vertex $v_j \in V, \ 0 \leq j \leq n$, let $\mathcal{I}(v_j) = Y_j \in \mathcal{Y}$.
The concentration is a map
\[
a : V \to \mathbb{Q}^{+},
\]
defined by
\begin{equation*}
    a(v_j) := \mu_j(Y_j) = \sum_{x \in Y_j} w_j(x).
\end{equation*}
\end{definition}

In the CM framework, vertices are endowed with a combinatorial structure that encodes their intrinsic discrete properties. The concentration map assigns to each vertex the discrete measure of its internal structure, that is, the total weight of the elements in the associated set. Consequently, each vertex is assigned a strictly positive concentration and thus contributes non-trivially.

Henceforth, the term `weight' will be used interchangeably with concentration. In the subsequent sections, we develop a method to extend the weight map from the vertex set to the simplicial complex.

\subsection{The weight map on 0-simplices}

The set of vertices $V$ constitutes the primitive objects of the simplicial complex. In the classical theory of simplicial complexes, these vertices are treated as structurally indistinguishable. In the CM, however, this homogeneity is broken: vertices are differentiated through the assignment of positive rational values, representing their internal properties, via the concentration map. 

Let $\Delta$ be a simplicial complex over a vertex set $V$. We define the weight map as follows:
\[
a : V \to \mathbb{Q}^+, \qquad v \mapsto a(v).
\]
Restricting the map $a$ to the active vertex set of the complex $\Delta$, we obtain the following:
\[
a|_{V(\Delta)} : V(\Delta) \to \mathbb{Q}^+.
\]
This restriction naturally induces a map on the set of $0$-simplices (vertices) of $\Delta$. Denoting this map by $a'$, we have:
\[
a' : S_0 \to \mathbb{Q}^+, \qquad 
\{v_i\} \mapsto a'(\{v_i\}) = a|_{V(\Delta)}(v_i).
\]
For convenience, we denote the map $a'$ again by $a$.
With the vertices now distinguished by their concentration values, we next define how this weight extends to higher-dimensional simplices (edges, triangles, etc.), thereby yielding a fully weighted simplicial structure.

\subsection{The fractional weight contribution map and contribution numbers}

In this section, we describe a constructive inductive procedure for extending the weight map dimension by dimension over the simplicial complex. The weights are initially assigned to the vertices, and the weight of a $q$-simplex is determined by the weights of its boundary simplices.

Let the weight map on the set of $0$-simplices, $S_0$, be
\[
a : S_0 \to \mathbb{Q}^{+}.
\]

We begin by defining the fractional weight contribution map $a_1$, which assigns to each pair of a $0$-simplex and a $1$-simplex a non-negative rational number representing the fraction of the weight of the vertex that contributes to the edge.

\begin{definition}
    The fractional weight contribution map $a_1$ is defined as
\[
a_1 : S_0 \times S_1 \longrightarrow \mathbb{Q}\setminus \mathbb{Q}^-,
\]
\[
(\sigma^{(0)}, \sigma^{(1)}) \longmapsto a_1(\sigma^{(0)}, \sigma^{(1)}),
\]
satisfying the following conditions:

\begin{itemize}
    \item[(i)] If $\sigma^{(0)} \not\subset \sigma^{(1)}$, then 
    \(
    a_1(\sigma^{(0)}, \sigma^{(1)}) = 0.
    \)
    
    \item[(ii)] If $\sigma^{(0)} \subset \sigma^{(1)}$, then
    \(
    0 < a_1(\sigma^{(0)}, \sigma^{(1)}) \le 1.
    \)
    
    \item[(iii)] For each $\sigma^{(0)} \in S_0$ with $k_{\sigma^{(0)}} \neq 0$,
    \(
    \sum_{\sigma^{(1)} \in S_1}
    a_1(\sigma^{(0)}, \sigma^{(1)}) = 1.
    \)
\end{itemize}
\end{definition}

From properties (i) and (ii), it follows that
\[
0 \le a_1(\sigma^{(0)}, \sigma^{(1)}) \le 1
\quad
\text{for all } \sigma^{(0)} \in S_0,\ \sigma^{(1)} \in S_1.
\]
If $k_{\sigma^{(0)}} = 0$, then by property (i),
\[
a_1(\sigma^{(0)}, \sigma^{(1)}) = 0
\quad
\text{for all } \sigma^{(1)} \in S_1.
\]

Based on this fractional map, we associate to each pair 
$(\sigma^{(0)}, \sigma^{(1)})$ a contribution number 
$n^{\sigma^{(1)}}_{\sigma^{(0)}}$, defined by
\[
n^{\sigma^{(1)}}_{\sigma^{(0)}}
=
a_1(\sigma^{(0)}, \sigma^{(1)})\, a(\sigma^{(0)}).
\]
This quantity represents the absolute weight contribution of the vertex $\sigma^{(0)}$ to the edge $\sigma^{(1)}$. Suppose that a $0$-simplex $\sigma^{(0)}$ is contained in 
$k_{\sigma^{(0)}} \neq 0$ distinct $1$-simplices
\[
\sigma^{(1)}_{i_1},\ \sigma^{(1)}_{i_2},\ \ldots,\ 
\sigma^{(1)}_{i_{k_{\sigma^{(0)}}}},
\quad
1 \le i_j \le n_1,\ 
1 \le j \le k_{\sigma^{(0)}}.
\]
Then, the weight $a(\sigma^{(0)})$ is partitioned among these 
$k_{\sigma^{(0)}}$ simplices, and by property (ii),
\[
0 < n^{\sigma^{(1)}_{i_j}}_{\sigma^{(0)}} \le a(\sigma^{(0)}),
\quad
1 \le j \le k_{\sigma^{(0)}}.
\]
Moreover, by property (iii), the conservation property holds:
\[
\sum_{j=1}^{k_{\sigma^{(0)}}}
n^{\sigma^{(1)}_{i_j}}_{\sigma^{(0)}}
=
a(\sigma^{(0)}).
\]

For $1$-simplices not containing $\sigma^{(0)}$, or when 
$k_{\sigma^{(0)}} = 0$, property (i) implies that
\[
n^{\sigma^{(1)}}_{\sigma^{(0)}} = 0.
\]
Consequently,
\[
\sum_{\sigma^{(1)} \in S_1}
n^{\sigma^{(1)}}_{\sigma^{(0)}}
=
a(\sigma^{(0)}),
\]
so the total weight of each vertex is conserved.

For convenience, we also denote the extended map on $S_1$ again by $a$. Finally, the weight of a $1$-simplex $\sigma^{(1)} \in S_1$ is defined as the sum of the contributions of its constituent vertices. Thus, extending the weight map $a$ to $S_1$, we obtain
\[
a(\sigma^{(1)})
=
\sum_{i=1}^{n_0}
n_{\sigma^{(0)}_i}^{\sigma^{(1)}}
=
\sum_{i=1}^{n_0}
a_1(\sigma^{(0)}_i, \sigma^{(1)}) \, a(\sigma^{(0)}_i),
\]
by property $(ii)$, $a(\sigma^{(1)}) > 0, \ \forall \sigma^{(1)} \in S_1$.

\subsection{The general fractional weight contribution map and higher-order extensions}
We generalise the previous construction to obtain an inductive procedure for extending the weight map from $(q-1)$-simplices to $q$-simplices.

To define the extension of the weight map from $S_{q-1}$ to $S_q$, we introduce the fractional weight contribution map $a_q$, which determines how the weight of a $(q-1)$-simplex is distributed among the $q$-simplices that contain it.

The value $a_q(\sigma^{(q-1)}, \sigma^{(q)})$ represents the fraction of the weight of the $(q-1)$-simplex $\sigma^{(q-1)}$ contributed to the weight of the $q$-simplex $\sigma^{(q)}$.

\begin{definition}
Let $1 \le q \le dim(\Delta)$. The fractional weight contribution map $a_q$ is defined as
\[
a_q : S_{q-1} \times S_q \longrightarrow \mathbb{Q} \setminus \mathbb{Q}^-,
\]
\[
(\sigma^{(q-1)}, \sigma^{(q)}) \longmapsto a_q(\sigma^{(q-1)}, \sigma^{(q)}),
\]
satisfying the following conditions:

\begin{itemize}
    \item[(i)]\label{1property_a_q_1} If $\sigma^{(q-1)} \not\subset \sigma^{(q)}$, then
    \(
    a_q(\sigma^{(q-1)}, \sigma^{(q)}) = 0.
    \)

    \item[(ii)]\label{1property_a_q_2} If $\sigma^{(q-1)} \subset \sigma^{(q)}$, then
    \(
    0 < a_q(\sigma^{(q-1)}, \sigma^{(q)}) \le 1.
    \)

    \item[(iii)]\label{1property_a_q_3} For each $\sigma^{(q-1)} \in S_{q-1}$ with $k_{\sigma^{(q-1)}} \neq 0$,
    \(
    \sum_{\sigma^{(q)} \in S_q}
    a_q(\sigma^{(q-1)}, \sigma^{(q)}) = 1.
    \)
\end{itemize}
\end{definition}

From (i) and (ii), we obtain
\[
0 \le a_q(\sigma^{(q-1)}, \sigma^{(q)}) \le 1
\quad
\text{for all } \sigma^{(q-1)} \in S_{q-1},\ \sigma^{(q)} \in S_q.
\]

If $k_{\sigma^{(q-1)}} = 0$, then by property (i),
    \[
    a_q(\sigma^{(q-1)}, \sigma^{(q)}) = 0
    \quad
    \text{for all } \sigma^{(q)} \in S_q.
    \]

Based on this map, we define a contribution number
\[
n^{\sigma^{(q)}}_{\sigma^{(q-1)}}
=
a_q(\sigma^{(q-1)}, \sigma^{(q)}) \, a(\sigma^{(q-1)}),
\]
which represents the absolute weight contribution of the $(q-1)$-simplex $\sigma^{(q-1)}$ to the $q$-simplex $\sigma^{(q)}$.

Suppose that a $(q-1)$-simplex $\sigma^{(q-1)}$ is contained in
$k_{\sigma^{(q-1)}} \neq 0$ distinct $q$-simplices
\[
\sigma^{(q)}_{i_1},\ \sigma^{(q)}_{i_2},\ \ldots,\ 
\sigma^{(q)}_{i_{k_{\sigma^{(q-1)}}}}
\quad
1 \le i_j \le n_{q},\ 
1 \le j \le k_{\sigma^{(q-1)}}.
\]
Then the weight $a(\sigma^{(q-1)})$ is partitioned among these simplices, and by property (ii),
\[
0 < n^{\sigma^{(q)}_{i_j}}_{\sigma^{(q-1)}} \le a(\sigma^{(q-1)}),
\quad
1 \le j \le k_{\sigma^{(q-1)}},
\]
with the conservation property
\[
\sum_{j=1}^{k_{\sigma^{(q-1)}}}
n^{\sigma^{(q)}_{i_j}}_{\sigma^{(q-1)}}
=
a(\sigma^{(q-1)}).
\]

For $q$-simplices not containing $\sigma^{(q-1)}$, or when 
$k_{\sigma^{(q-1)}} = 0$, we get
\[
n^{\sigma^{(q)}}_{\sigma^{(q-1)}} = 0.
\]
Consequently,
\[
\sum_{\sigma^{(q)} \in S_q}
n^{\sigma^{(q)}}_{\sigma^{(q-1)}}
=
a(\sigma^{(q-1)}),
\]
so the total weight of each $(q-1)$-simplex is conserved.

Finally, the weight of a $q$-simplex $\sigma^{(q)} \in S_q$ is defined as the sum of the contributions of its boundary simplices:
\[
a(\sigma^{(q)})
=
\sum_{\sigma^{(q-1)} \in S_{q-1}}
n^{\sigma^{(q)}}_{\sigma^{(q-1)}}
=
\sum_{\sigma^{(q-1)} \in S_{q-1}}
a_q(\sigma^{(q-1)}, \sigma^{(q)}) \, a(\sigma^{(q-1)}),
\]
by property $(ii)$, $a(\sigma^{(q)}) > 0, \ \forall \sigma^{(q)} \in S_q$.

Thus, inductively, the concentration map $a$ extends from $S_0$ to all simplices of $\Delta$, yielding a well-defined weight map
\[
a : \Delta \longrightarrow \mathbb{Q} \setminus \mathbb{Q}^-.
\]

We now define the composition of two consecutive fractional weight contribution maps.

\subsection{The composition of the fractional weight contribution maps}
The composition of two consecutive fractional weight contribution maps is again a fractional weight contribution map, describing the fraction of the weight of a $(q-1)$-simplex contributed to a $(q+1)$-simplex. This composed map enables the computation of weight contributions across multiple dimensions, in particular from vertices to higher-dimensional simplices.

The composition map is useful for determining the fractional contribution number of a $0$-simplex to any $q$-simplex.

To derive this composition map, we start with the definition of $a(\sigma^{(q+1)})$ and expand it iteratively:
\begin{align*}
a(\sigma^{(q+1)}) &=
\sum_{i=1}^{n_q}
a_{q+1}\bigl(\sigma_i^{(q)}, \sigma^{(q+1)}\bigr)
\, a(\sigma_i^{(q)}) \\
\intertext{Substituting the value of $a(\sigma_i^{(q)})$ from the previous dimension:}
&=
\sum_{i=1}^{n_q}
a_{q+1}\bigl(\sigma_i^{(q)}, \sigma^{(q+1)}\bigr)
\left(
\sum_{j=1}^{n_{q-1}}
a_q\bigl(\sigma_j^{(q-1)}, \sigma_i^{(q)}\bigr)
\, a(\sigma_j^{(q-1)})
\right) \\
&=
\sum_{i=1}^{n_q}
\sum_{j=1}^{n_{q-1}}
a_{q+1}\bigl(\sigma_i^{(q)}, \sigma^{(q+1)}\bigr)
\, a_q\bigl(\sigma_j^{(q-1)}, \sigma_i^{(q)}\bigr)
\, a(\sigma_j^{(q-1)}) \\
\intertext{Interchanging the order of summation (valid as all summand terms are positive):}
&=
\sum_{j=1}^{n_{q-1}}
\left(
\sum_{i=1}^{n_q}
a_{q+1}\bigl(\sigma_i^{(q)}, \sigma^{(q+1)}\bigr)
\, a_q\bigl(\sigma_j^{(q-1)}, \sigma_i^{(q)}\bigr)
\right)
a(\sigma_j^{(q-1)}).
\end{align*}

Thus we may define,
\[
a(\sigma^{(q+1)}) =
\sum_{j=1}^{n_{q-1}}
(a_{q+1} \circ a_q)\bigl(\sigma_j^{(q-1)}, \sigma^{(q+1)}\bigr)
\, a(\sigma_j^{(q-1)}),
\]
where
\[
(a_{q+1} \circ a_q)\bigl(\sigma^{(q-1)}, \sigma^{(q+1)}\bigr)
=
\sum_{\sigma^{(q)} \in S_q}
a_{q+1}\bigl(\sigma^{(q)}, \sigma^{(q+1)}\bigr)
\, a_q\bigl(\sigma^{(q-1)}, \sigma^{(q)}\bigr).
\]

\begin{theorem}
Let $1 \le q \le \dim(\Delta)-1$. 
Let $a_q$ and $a_{q+1}$ be the fractional weight contribution maps. 
Then their composition map is
\[
a_{q+1} \circ a_q : S_{q-1} \times S_{q+1} 
\longrightarrow \mathbb{Q} \setminus \mathbb{Q}^-,
\]
\[
(\sigma^{(q-1)}, \sigma^{(q+1)}) 
\longmapsto 
(a_{q+1} \circ a_q)(\sigma^{(q-1)}, \sigma^{(q+1)}),
\]
where
\[
(a_{q+1} \circ a_q)(\sigma^{(q-1)}, \sigma^{(q+1)})
=
\sum_{\sigma^{(q)} \in S_q}
a_{q+1}(\sigma^{(q)}, \sigma^{(q+1)})
a_q(\sigma^{(q-1)}, \sigma^{(q)}).
\]

The map $a_{q+1} \circ a_q$ satisfies the following conditions:

\begin{itemize}
    \item[(i)] If $\sigma^{(q-1)} \not\subset \sigma^{(q+1)}$, then
    \(
    (a_{q+1} \circ a_q)(\sigma^{(q-1)}, \sigma^{(q+1)}) = 0.
    \)

    \item[(ii)] If $\sigma^{(q-1)} \subset \sigma^{(q+1)}$, then
    \(
    0 < (a_{q+1} \circ a_q)(\sigma^{(q-1)}, \sigma^{(q+1)}) \le 1.
    \)

    \item[(iii)] For each $\sigma^{(q-1)} \in S_{q-1}$ if $\exists \sigma^{(q+1)} \in S_{q+1}$ such that $\sigma^{(q-1)} \subset \sigma^{(q+1)}$, then
    \[
    \sum_{\sigma^{(q+1)} \in S_{q+1}}
    (a_{q+1} \circ a_q)(\sigma^{(q-1)}, \sigma^{(q+1)}) = 1.
    \]
\end{itemize}
\end{theorem}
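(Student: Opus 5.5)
The plan is to verify the three properties directly from the defining formula
\[
(a_{q+1}\circ a_q)(\sigma^{(q-1)},\sigma^{(q+1)})=\sum_{\sigma^{(q)}\in S_q}a_{q+1}(\sigma^{(q)},\sigma^{(q+1)})\,a_q(\sigma^{(q-1)},\sigma^{(q)}),
\]
using properties (i)--(iii) of $a_q$ and $a_{q+1}$ and the closure of $\Delta$ under taking faces. The sum is finite and every summand is a product of non-negative rationals. Hence the map indeed takes values in $\mathbb{Q}\setminus\mathbb{Q}^-$.

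\textbf{Property (i).} By property (i) of $a_q$ and $a_{q+1}$, a summand can be non-zero only if $\sigma^{(q-1)}\subset\sigma^{(q)}\subset\sigma^{(q+1)}$. This forces $\sigma^{(q-1)}\subset\sigma^{(q+1)}$, so if the containment fails, every summand vanishes.

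\textbf{Property (ii).} Suppose $\sigma^{(q-1)}\subset\sigma^{(q+1)}$, and let $u,w$ be the two vertices of $\sigma^{(q+1)}\setminus\sigma^{(q-1)}$. The sets $\sigma^{(q+1)}\setminus\{u\}$ and $\sigma^{(q+1)}\setminus\{w\}$ are $q$-simplices of $\Delta$ by closure. Each of them lies strictly between $\sigma^{(q-1)}$ and $\sigma^{(q+1)}$, so by property (ii) of both maps each contributes a strictly positive summand, which gives positivity. For the upper bound, I use $a_{q+1}\le 1$ to get
\[
(a_{q+1}\circ a_q)(\sigma^{(q-1)},\sigma^{(q+1)})\le\sum_{\sigma^{(q)}\in S_q}a_q(\sigma^{(q-1)},\sigma^{(q)})=1 .
\]
The final equality is property (iii) of $a_q$, which applies because $k_{\sigma^{(q-1)}}\neq 0$ (the face $\sigma^{(q+1)}\setminus\{u\}$ contains $\sigma^{(q-1)}$).

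\textbf{Property (iii), the main obstacle.} Interchanging the finite sums gives
\[
\sum_{\sigma^{(q+1)}\in S_{q+1}}(a_{q+1}\circ a_q)(\sigma^{(q-1)},\sigma^{(q+1)})=\sum_{\sigma^{(q)}\in S_q}a_q(\sigma^{(q-1)},\sigma^{(q)})\Bigl(\sum_{\sigma^{(q+1)}\in S_{q+1}}a_{q+1}(\sigma^{(q)},\sigma^{(q+1)})\Bigr).
\]
The inner sum equals $1$ when $k_{\sigma^{(q)}}\neq 0$ and equals $0$ otherwise. The whole expression is therefore the total $a_q$-fraction that $\sigma^{(q-1)}$ sends to those $q$-simplices having at least one $(q+1)$-coface. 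This equals $1$ precisely when every $q$-simplex containing $\sigma^{(q-1)}$ is itself contained in some $(q+1)$-simplex. Under the stated hypothesis alone this can fail. Take a vertex $v$ lying in two edges $e_1$ and $e_2$, where $e_1$ is a face of a triangle but $e_2$ is a facet. Then the sum equals $a_1(v,e_1)<1$.

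So I would prove (iii) under the additional hypothesis that every $q$-simplex containing $\sigma^{(q-1)}$ has upper degree non-zero. Without that hypothesis, the correct statement is that the sum is at most $1$, with equality exactly in that case. The identity above shows both facts at once. One natural sufficient condition is that $\Delta$ is pure, so that every face of dimension less than $\dim\Delta$ has a coface.
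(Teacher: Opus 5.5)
Your arguments for the range of the map and for (i) and (ii) are the same as the paper's. You are slightly more careful in two places: you name the two intermediate faces $\sigma^{(q+1)}\setminus\{u\}$ and $\sigma^{(q+1)}\setminus\{w\}$, and you check $k_{\sigma^{(q-1)}}\neq 0$ before using property (iii) of $a_q$ in the upper bound, which the paper does silently.

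For (iii), the paper performs the same interchange of sums. It then replaces the inner sum $\sum_{\sigma^{(q+1)}\in S_{q+1}}a_{q+1}(\sigma^{(q)},\sigma^{(q+1)})$ by $1$ for every $\sigma^{(q)}\in S_q$. This is exactly the step you reject, and you are right to. Property (iii) of $a_{q+1}$ only applies when $k_{\sigma^{(q)}}\neq 0$, and a $q$-dimensional facet containing $\sigma^{(q-1)}$ contributes $0$ instead of $a_q(\sigma^{(q-1)},\sigma^{(q)})>0$. So item (iii) is false in general. It fails whenever some $\sigma^{(q-1)}$ lies both in a $(q+1)$-simplex and in a $q$-dimensional facet, which non-pure complexes like those produced by the paper's inclusion rule readily allow. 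Your replacement is the correct statement: under the hypothesis of (iii),
\[
\sum_{\sigma^{(q+1)}\in S_{q+1}}(a_{q+1}\circ a_q)(\sigma^{(q-1)},\sigma^{(q+1)})
=\sum_{\substack{\sigma^{(q)}\supset\sigma^{(q-1)}\\ k_{\sigma^{(q)}}\neq 0}} a_q(\sigma^{(q-1)},\sigma^{(q)})\le 1,
\]
with equality if and only if every $q$-simplex containing $\sigma^{(q-1)}$ has a $(q+1)$-coface. This holds, for instance, when $\Delta$ is pure.

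There is one slip in your counterexample. If $e_1\ni v$ is a face of a triangle, that triangle contains a second edge through $v$. So $v$ cannot lie in only $e_1$ and $e_2$, and the sum is not $a_1(v,e_1)$. A clean instance: let $\Delta$ be generated by the triangle $t=\{v,x,y\}$ and the edge $\{v,z\}$, so $\dim\Delta=2$ and $q=1$ is admissible. Since $\{v,x\}$ and $\{v,y\}$ lie only in $t$, the map $a_2$ is forced to give them value $1$, and
\[
\sum_{\sigma^{(2)}\in S_2}(a_2\circ a_1)(\{v\},\sigma^{(2)})=a_1(\{v\},\{v,x\})+a_1(\{v\},\{v,y\})=1-a_1(\{v\},\{v,z\})<1
\]
for every admissible $a_1$. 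Under the equal splitting used in the paper's illustration, this value is $2/3$. With this fix, your proposal proves (i), (ii) and the corrected form of (iii).
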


\begin{proof}

Since
\[
a_q(\sigma^{(q-1)}, \sigma^{(q)}) 
\in \mathbb{Q} \setminus \mathbb{Q}^-,
\qquad
a_{q+1}(\sigma^{(q)}, \sigma^{(q+1)}) 
\in \mathbb{Q} \setminus \mathbb{Q}^-,
\]
their product belongs to $\mathbb{Q} \setminus \mathbb{Q}^-$, and hence
\[
(a_{q+1} \circ a_q)(\sigma^{(q-1)}, \sigma^{(q+1)})
\in \mathbb{Q} \setminus \mathbb{Q}^-.
\]

\medskip

\noindent
\text{Proof of (i):}
If $\sigma^{(q-1)} \not\subset \sigma^{(q+1)}$, then there does not exist 
$\sigma^{(q)} \in S_q$ such that
\[
\sigma^{(q-1)} \subset \sigma^{(q)} 
\subset \sigma^{(q+1)}.
\]
Hence for every $\sigma^{(q)} \in S_q$, either
\[
a_q(\sigma^{(q-1)}, \sigma^{(q)}) = 0
\quad \text{or} \quad
a_{q+1}(\sigma^{(q)}, \sigma^{(q+1)}) = 0.
\]
Therefore every summand in
\[
(a_{q+1} \circ a_q)(\sigma^{(q-1)}, \sigma^{(q+1)})
=
\sum_{\sigma^{(q)} \in S_q}
a_{q+1}(\sigma^{(q)}, \sigma^{(q+1)})
a_q(\sigma^{(q-1)}, \sigma^{(q)})
\]
is zero, and thus
\[
(a_{q+1} \circ a_q)(\sigma^{(q-1)}, \sigma^{(q+1)}) = 0.
\]

\medskip

\noindent
\text{Proof of (ii):}
If $\sigma^{(q-1)} \subset \sigma^{(q+1)}$, then there exists at least one 
$\sigma^{(q)} \in S_q$ such that
\[
\sigma^{(q-1)} \subset \sigma^{(q)} 
\subset \sigma^{(q+1)}.
\]
For such $\sigma^{(q)}$,
\[
a_q(\sigma^{(q-1)}, \sigma^{(q)}) > 0
\quad \text{and} \quad
a_{q+1}(\sigma^{(q)}, \sigma^{(q+1)}) > 0.
\]
Hence at least one summand in the defining sum is strictly positive,
and therefore
\[
0 < (a_{q+1} \circ a_q)(\sigma^{(q-1)}, \sigma^{(q+1)}).
\]
Since 
\[
0 \le a_{q+1}(\sigma^{(q)}, \sigma^{(q+1)}) \le 1
\quad \text{and} \quad
a_q(\sigma^{(q-1)}, \sigma^{(q)}) \ge 0,
\]
we have
\[
a_{q+1}(\sigma^{(q)}, \sigma^{(q+1)})
\, a_q(\sigma^{(q-1)}, \sigma^{(q)})
\le
a_q(\sigma^{(q-1)}, \sigma^{(q)}).
\]
Summing over $\sigma^{(q)} \in S_q$ gives
\begin{align*}
(a_{q+1} \circ a_q)(\sigma^{(q-1)}, \sigma^{(q+1)})
&=
\sum_{\sigma^{(q)} \in S_q}
a_{q+1}(\sigma^{(q)}, \sigma^{(q+1)})
a_q(\sigma^{(q-1)}, \sigma^{(q)}) \\
&\le
\sum_{\sigma^{(q)} \in S_q}
a_q(\sigma^{(q-1)}, \sigma^{(q)}) \\
&= 1.
\end{align*}

\medskip

\noindent
\text{Proof of (iii):}
Let $\sigma^{(q-1)} \in S_{q-1}$ such that there exists
$\sigma^{(q+1)} \in S_{q+1}$ with
$\sigma^{(q-1)} \subset \sigma^{(q+1)}$.
Then
\begin{align*}
\sum_{\sigma^{(q+1)} \in S_{q+1}}
(a_{q+1} \circ a_q)(\sigma^{(q-1)}, \sigma^{(q+1)})
&=
\sum_{\sigma^{(q+1)} \in S_{q+1}}
\sum_{\sigma^{(q)} \in S_q}
a_{q+1}(\sigma^{(q)}, \sigma^{(q+1)})
a_q(\sigma^{(q-1)}, \sigma^{(q)}) \notag \\
&=
\sum_{\sigma^{(q)} \in S_q}
\left(
\sum_{\sigma^{(q+1)} \in S_{q+1}}
a_{q+1}(\sigma^{(q)}, \sigma^{(q+1)})
\right)
a_q(\sigma^{(q-1)}, \sigma^{(q)}) \notag \\
&=
\sum_{\sigma^{(q)} \in S_q}
a_q(\sigma^{(q-1)}, \sigma^{(q)}) \notag \\
&= 1.
\end{align*}
\end{proof}

Let $a : S_{q-1} \to \mathbb{Q} \setminus \mathbb{Q}^-$ 
be a weight map on $S_{q-1}$, and let 
$a_{q+1} \circ a_q$ be the composition of the fractional weight contribution maps.

Then the weight map extends to $S_{q+1}$ by defining
\[
a : S_{q+1} \longrightarrow \mathbb{Q} \setminus \mathbb{Q}^-,
\]
\[
\sigma^{(q+1)} \longmapsto a(\sigma^{(q+1)}),
\]
where for each $\sigma^{(q+1)} \in S_{q+1}$,
\[
a(\sigma^{(q+1)}) =
\sum_{\sigma^{(q-1)} \in S_{q-1}}
(a_{q+1} \circ a_q)
\bigl(\sigma^{(q-1)}, \sigma^{(q+1)}\bigr)
\, a(\sigma^{(q-1)}).
\]

This completes the construction of the extension of the weight map to all simplices of the simplicial complex.

\subsection{Comparison of the total weights of 0-simplices and facets}

In the preceding sections, we extended the weight map to the simplicial complex, thereby assigning a concentration value to each simplex. The resulting weight distribution over the simplicial complex satisfies several structural properties, which we establish below in the form of propositions and theorem.

First, we compare the total weights of $q$-simplices and $(q-1)$-simplices.

\begin{proposition}\label{1prop:weight-compare}
Let $1 \le q \le dim(\Delta)$. The sum of the weights of all $q$-simplices equals the sum of the weights of all $(q-1)$-simplices that are not facets of $\Delta$. That is,
\begin{equation}\label{1eq:weight-compare}
\sum_{j=1}^{n_q} a\bigl(\sigma_j^{(q)}\bigr)
=
\sum_{i=1}^{n_{q-1}} \chi_i \, a\bigl(\sigma_i^{(q-1)}\bigr),
\end{equation}
where
\[
\chi_i =
\begin{cases}
1, & \text{if } \sigma_i^{(q-1)} \text{ is not a facet of } \Delta, \\[6pt]
0, & \text{if } \sigma_i^{(q-1)} \text{ is a facet of } \Delta,
\end{cases}
\quad \text{for } 1 \le i \le n_{q-1},
\]
and $n_q$ denotes the number of $q$-simplices of $\Delta$.
\end{proposition}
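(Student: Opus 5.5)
The plan is a double-counting argument: expand the left-hand side using the definition of the extended weight map, swap the order of summation, and then apply the conservation property (iii) of $a_q$.

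\textbf{Step 1 (expand and swap).} By the inductive definition,
\[
a(\sigma_j^{(q)})=\sum_{i=1}^{n_{q-1}} a_q(\sigma_i^{(q-1)},\sigma_j^{(q)})\,a(\sigma_i^{(q-1)}).
\]
Summing over $j$ and exchanging the two finite sums gives
\[
\sum_{j=1}^{n_q} a(\sigma_j^{(q)})=\sum_{i=1}^{n_{q-1}}\Bigl(\sum_{j=1}^{n_q} a_q(\sigma_i^{(q-1)},\sigma_j^{(q)})\Bigr)a(\sigma_i^{(q-1)}).
\]
The exchange is legitimate because both sums are finite.

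\textbf{Step 2 (identify the inner sum with $\chi_i$).} We show that $\sum_j a_q(\sigma_i^{(q-1)},\sigma_j^{(q)})=\chi_i$ by splitting into two cases on the upper degree $k_{\sigma_i^{(q-1)}}$.

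\emph{Case $k_{\sigma_i^{(q-1)}}\neq 0$.} Property (iii) of $a_q$ makes the inner sum equal to $1$.

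\emph{Case $k_{\sigma_i^{(q-1)}}=0$.} Property (i) makes every term vanish, so the inner sum is $0$.

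\textbf{Step 3 (the main obstacle).} The one real point is to show that $k_{\sigma_i^{(q-1)}}\neq 0$ holds exactly when $\sigma_i^{(q-1)}$ is not a facet of $\Delta$.

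\emph{If the upper degree is nonzero,} some $q$-simplex strictly contains $\sigma_i^{(q-1)}$. Hence $\sigma_i^{(q-1)}$ is not a facet.

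\emph{Conversely, suppose $\sigma_i^{(q-1)}$ is not a facet.} Then there is some $\tau\in\Delta$ with $\sigma_i^{(q-1)}\subsetneq\tau$. Pick any vertex $v\in\tau\setminus\sigma_i^{(q-1)}$. The set $\sigma_i^{(q-1)}\cup\{v\}$ is a subset of $\tau$, so closure under inclusion puts it in $\Delta$, and it is a $q$-simplex. Thus $k_{\sigma_i^{(q-1)}}\ge 1$.

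This equivalence gives $\sum_j a_q(\sigma_i^{(q-1)},\sigma_j^{(q)})=\chi_i$ for every $i$. Substituting into Step 1 yields \eqref{1eq:weight-compare}.
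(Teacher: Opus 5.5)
Your proof is correct and follows the paper's argument essentially step for step: expand each $a(\sigma_j^{(q)})$, interchange the finite sums, and evaluate the inner sum as $1$ or $0$ using properties (iii) and (i) of $a_q$. Your Step 3 also supplies the justification, via closure under inclusion, that a non-facet $(q-1)$-simplex lies in some $q$-simplex, which the paper asserts without proof.
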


\begin{proof}
We begin by expanding the definition of the weight of each $q$-simplex in terms of the weights of its $(q-1)$-faces. By definition, this gives
\[
\sum_{j=1}^{n_q} a(\sigma_j^{(q)}) 
=
\sum_{j=1}^{n_q} \sum_{i=1}^{n_{q-1}} 
a_q(\sigma_i^{(q-1)}, \sigma_j^{(q)}) \, a(\sigma_i^{(q-1)}).
\]

Since all summands are non-negative, we may interchange the order of summation. This yields
\[
\sum_{j=1}^{n_q} a(\sigma_j^{(q)}) 
=
\sum_{i=1}^{n_{q-1}} 
\left(
\sum_{j=1}^{n_q} 
a_q(\sigma_i^{(q-1)}, \sigma_j^{(q)})
\right)
a(\sigma_i^{(q-1)}).
\]

We now examine the inner sum for a fixed $(q-1)$-simplex $\sigma_i^{(q-1)}$. If $\sigma_i^{(q-1)}$ is not a facet of $\Delta$, then it is contained in at least one $q$-simplex, and hence $k_{\sigma_i^{(q-1)}} \neq 0$. By the defining property of the map $a_q$, we have
\[
\sum_{j=1}^{n_q} 
a_q(\sigma_i^{(q-1)}, \sigma_j^{(q)}) = 1,
\]
and therefore
\[
\sum_{j=1}^{n_q} 
a_q(\sigma_i^{(q-1)}, \sigma_j^{(q)}) \, a(\sigma_i^{(q-1)})
=
a(\sigma_i^{(q-1)}).
\]

On the other hand, if $\sigma_i^{(q-1)}$ is a facet of $\Delta$, then it is not contained in any $q$-simplex, so $k_{\sigma_i^{(q-1)}} = 0$. In this case,
\[
\sum_{j=1}^{n_q} 
a_q(\sigma_i^{(q-1)}, \sigma_j^{(q)}) = 0,
\]
and consequently
\[
\sum_{j=1}^{n_q} 
a_q(\sigma_i^{(q-1)}, \sigma_j^{(q)}) \, a(\sigma_i^{(q-1)}) = 0.
\]

Combining both cases, we may write
\[
\sum_{j=1}^{n_q} a(\sigma_j^{(q)}) 
=
\sum_{i=1}^{n_{q-1}} \chi_i \, a(\sigma_i^{(q-1)}),
\]
where
\[
\chi_i =
\begin{cases}
1, & \text{if } k_{\sigma_i^{(q-1)}} \neq 0 \; (\text{that is, } \sigma_i^{(q-1)} \text{ is not a facet of } \Delta), \\[6pt]
0, & \text{if } k_{\sigma_i^{(q-1)}} = 0 \; (\text{that is, } \sigma_i^{(q-1)} \text{ is a facet of } \Delta).
\end{cases}
\]
This completes the proof.
\end{proof}

Next, we derive a corollary expressing the total weight of $q$-simplices in terms of the total weight of $(q-1)$-simplices and the weight of $(q-1)$-dimensional facets.

\begin{corollary}\label{1cor:weight-compare-facet}
The sum of the weights of all $q$-simplices together with the sum of the weights of the $(q-1)$-dimensional facets is equal to the sum of the weights of the $(q-1)$-simplices.

Let $1 \le q \le dim(\Delta)$. Then
\begin{equation}\label{1eq:weight-compare-facet}
\sum_{i=1}^{n_q} a(\sigma_i^{(q)}) 
+
\sum_{\substack{\sigma \text{ is a facet of } \Delta \\ dim(\sigma) = q-1}} a(\sigma)
=
\sum_{i=1}^{n_{q-1}} a(\sigma_i^{(q-1)}).
\end{equation}
\end{corollary}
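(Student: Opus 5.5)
The plan is to derive the corollary directly from Proposition \ref{1prop:weight-compare}, by splitting the right-hand side of \eqref{1eq:weight-compare} according to the indicator $\chi_i$. Fix $1 \le q \le \dim(\Delta)$. For each $(q-1)$-simplex, $\chi_i + (1-\chi_i) = 1$. Hence the total weight of the $(q-1)$-simplices splits as
\[
\sum_{i=1}^{n_{q-1}} a\bigl(\sigma_i^{(q-1)}\bigr)
=
\sum_{i=1}^{n_{q-1}} \chi_i\, a\bigl(\sigma_i^{(q-1)}\bigr)
+
\sum_{i=1}^{n_{q-1}} (1-\chi_i)\, a\bigl(\sigma_i^{(q-1)}\bigr).
\]

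Next I will identify each of the two sums on the right.

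\begin{itemize}
\item[(a)] By Proposition \ref{1prop:weight-compare}, the first sum equals $\sum_{j=1}^{n_q} a(\sigma_j^{(q)})$.
\item[(b)] By the definition of $\chi_i$, we have $1-\chi_i = 1$ exactly when $\sigma_i^{(q-1)}$ is a facet of $\Delta$, and $0$ otherwise. So the second sum is precisely the sum of $a(\sigma)$ over facets $\sigma$ of $\Delta$ with $\dim(\sigma) = q-1$. This step needs the observation that every $(q-1)$-dimensional facet appears in the enumeration of $S_{q-1}$. That holds trivially, since facets of $\Delta$ are simplices of $\Delta$ and $S_{q-1}$ lists all $(q-1)$-simplices.
\end{itemize}

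Substituting (a) and (b) into the split gives \eqref{1eq:weight-compare-facet}.

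There is no real obstacle. The only point needing care is the characterisation used in (b): a $(q-1)$-simplex is a facet if and only if $k_{\sigma^{(q-1)}} = 0$. This equivalence was already used in the proof of Proposition \ref{1prop:weight-compare}. It holds because any simplex strictly containing $\sigma^{(q-1)}$ contains a $q$-simplex containing $\sigma^{(q-1)}$, by closure under subsets. I will note it briefly so the facet sum is correctly matched with the terms where $\chi_i = 0$.
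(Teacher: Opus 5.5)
Your proposal is correct and follows essentially the same route as the paper. The paper also starts from equation \eqref{1eq:weight-compare}, rewrites $\sum_i \chi_i\, a(\sigma_i^{(q-1)})$ as the total $(q-1)$-weight minus the weight of the $(q-1)$-dimensional facets, and rearranges; your remark that closure under subsets is what makes ``facet'' equivalent to $k_{\sigma^{(q-1)}}=0$ supplies a detail the paper only asserts in the proof of Proposition~\ref{1prop:weight-compare}.
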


\begin{proof}
We begin with equation~\eqref{1eq:weight-compare} from Proposition~\ref{1prop:weight-compare}, which states that
\[
\sum_{i=1}^{n_q} a(\sigma_i^{(q)}) 
= 
\sum_{i=1}^{n_{q-1}} \chi_i \, a(\sigma_i^{(q-1)}),
\]
where $\chi_i = 1$ if $\sigma_i^{(q-1)}$ is not a facet of $\Delta$, and $\chi_i = 0$ if $\sigma_i^{(q-1)}$ is a facet.

We now decompose the sum on the right-hand side into contributions from all $(q-1)$-simplices and those which are facets. Writing this explicitly, we obtain
\[
\sum_{i=1}^{n_{q-1}} \chi_i \, a(\sigma_i^{(q-1)}) 
=
\sum_{i=1}^{n_{q-1}} a(\sigma_i^{(q-1)}) 
-
\sum_{\substack{\sigma \text{ is a facet of } \Delta \\ dim(\sigma)=q-1}} a(\sigma).
\]

Substituting this expression into the left-hand side and rearranging the terms gives
\begin{equation}\label{1eq:weight-compare-facet2}
\sum_{i=1}^{n_q} a(\sigma_i^{(q)}) 
+
\sum_{\substack{\sigma \text{ is a facet of } \Delta \\ dim(\sigma)=q-1}} a(\sigma)
=
\sum_{i=1}^{n_{q-1}} a(\sigma_i^{(q-1)}).
\end{equation}
\end{proof}

The following corollary compares the total weight of $q$-simplices with the total weight of $0$-simplices.

\begin{corollary}\label{1cor:weight-compare-0-q}
Let $1 \le q \le dim(\Delta)$. The total weight of the $q$-simplices, together with the weights of all facets of dimension less than $q$, is equal to the total weight of the $0$-simplices. That is,
\[
\sum_{i=1}^{n_q} a\bigl(\sigma_i^{(q)}\bigr)
+
\sum_{\substack{\sigma \text{ is a facet of } \Delta \\ dim(\sigma) < q}} a(\sigma)
=
\sum_{i=1}^{n_0} a(\sigma_i^{(0)}).
\]
\end{corollary}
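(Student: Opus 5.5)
The plan is to telescope Corollary~\ref{1cor:weight-compare-facet} across dimensions, proceeding by induction on $q$. For brevity, write
\[
T_p = \sum_{i=1}^{n_p} a\bigl(\sigma_i^{(p)}\bigr)
\quad\text{and}\quad
F_p = \sum_{\substack{\sigma \text{ is a facet of } \Delta \\ dim(\sigma)=p}} a(\sigma).
\]
For every $1 \le p \le dim(\Delta)$, equation~\eqref{1eq:weight-compare-facet} reads $T_p + F_{p-1} = T_{p-1}$. The claim to prove is
\[
T_q + \sum_{p=0}^{q-1} F_p = T_0 .
\]
This is the right form because the facets of dimension less than $q$ are exactly the facets of dimensions $0,1,\dots,q-1$.

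The base case $q=1$ is Corollary~\ref{1cor:weight-compare-facet} with $q=1$ verbatim. For the inductive step, fix $2 \le q \le dim(\Delta)$ and assume $T_{q-1} + \sum_{p=0}^{q-2} F_p = T_0$. Corollary~\ref{1cor:weight-compare-facet} at index $q$ gives $T_{q-1} = T_q + F_{q-1}$. Substituting this yields
\[
T_q + F_{q-1} + \sum_{p=0}^{q-2} F_p = T_0,
\]
which is the statement at $q$. Equivalently, one can sum the identities $T_p - T_{p-1} = -F_{p-1}$ for $p=1,\dots,q$ and observe that the left-hand side telescopes to $T_q - T_0$.

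There is no real obstacle here; the only care needed is bookkeeping. First, every index $p \le q \le dim(\Delta)$ lies in the admissible range $1 \le p \le dim(\Delta)$ of Corollary~\ref{1cor:weight-compare-facet}, so each invocation is legitimate. Second, splitting the facets of dimension $<q$ by dimension is a partition, so no facet is counted twice. Third, $F_p = 0$ whenever no facet of dimension $p$ exists, which the empty-sum convention handles, so these cases need no separate treatment.
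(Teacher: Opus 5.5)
Your proof is correct and is essentially the paper's argument: the paper substitutes Corollary~\ref{1cor:weight-compare-facet} repeatedly for dimensions $q, q-1, \dots, 1$ until only $0$-simplices remain. Your induction on $q$ (equivalently, summing $T_p - T_{p-1} = -F_{p-1}$ over $p = 1,\dots,q$) is a cleaner, explicit formalisation of that ``proceeding iteratively'' step.
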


\begin{proof}
We apply Corollary~\ref{1cor:weight-compare-facet} successively for decreasing dimensions, starting from dimension $q$ and proceeding down to dimension $1$.

For $1\le q \le dim(\Delta)$, equation~\eqref{1eq:weight-compare-facet} gives
\[
\sum_{i=1}^{n_q} a(\sigma_i^{(q)}) 
+
\sum_{\substack{\sigma \text{ is a facet of } \Delta \\ dim(\sigma)=q-1}} a(\sigma)
=
\sum_{i=1}^{n_{q-1}} a(\sigma_i^{(q-1)}).
\]

Applying the equation \ref{1eq:weight-compare-facet2} to the right-hand side for dimension $q-1$, we obtain
\[
\sum_{i=1}^{n_{q-1}} a(\sigma_i^{(q-1)}) 
=
\sum_{i=1}^{n_{q-2}} a(\sigma_i^{(q-2)}) 
-
\sum_{\substack{\sigma \text{ is a facet of } \Delta \\ dim(\sigma)=q-2}} a(\sigma).
\]

Substituting this expression into the previous equation and proceeding iteratively over simplices of dimensions $q-1, q-2, \dots, 1$, we eventually obtain an expression involving only $0$-simplices. After the final step, we are left with

\[
\sum_{i=1}^{n_q} a(\sigma_i^{(q)})
+
\sum_{\substack{\sigma \text{ is a facet of } \Delta \\ dim(\sigma) < q}} a(\sigma)
=
\sum_{i=1}^{n_0} a(\sigma_i^{(0)}),
\]
which establishes the result.
\end{proof}

Finally, we prove a theorem establishing the equality of total weight between the facets and the $0$-simplices.

\begin{theorem}\label{1thm:facet-vertex-weight}
The total weight of all facets of $\Delta$ is equal to the total weight of the $0$-simplices. That is,
\[
\sum_{\substack{\sigma \text{ is a facet of } \Delta}} a(\sigma)
=
\sum_{i=1}^{n_0} a(\sigma_i^{(0)}).
\]
\end{theorem}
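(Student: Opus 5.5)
The plan is to apply Corollary~\ref{1cor:weight-compare-0-q} at the top dimension $q = d := dim(\Delta)$ and then identify every $d$-simplex as a facet. If $d = 0$, every simplex is a vertex. Each vertex is then a facet, since there are no simplices of higher dimension, and the identity is immediate. So assume $d \ge 1$.

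Corollary~\ref{1cor:weight-compare-0-q} with $q = d$ gives
\[
\sum_{i=1}^{n_d} a\bigl(\sigma_i^{(d)}\bigr)
+
\sum_{\substack{\sigma \text{ is a facet of } \Delta \\ dim(\sigma) < d}} a(\sigma)
=
\sum_{i=1}^{n_0} a(\sigma_i^{(0)}).
\]
It then suffices to show that the first sum on the left is exactly the sum of $a(\sigma)$ over the facets of dimension $d$. Together with the second sum, this covers every facet exactly once, because every facet has dimension at most $d$.

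For this identification, take any $\sigma \in S_d$ and suppose $\sigma \subseteq \tau$ with $\tau \in \Delta\setminus\{\sigma\}$. Then $dim(\tau) > d = dim(\Delta)$, contradicting the definition of $dim(\Delta)$. So every $d$-simplex is a facet. Conversely, a facet of dimension $d$ is by definition an element of $S_d$. Hence the $d$-dimensional facets are precisely $S_d$. Substituting this into the displayed equation and merging the two sums yields
\[
\sum_{\sigma \text{ is a facet of } \Delta} a(\sigma) = \sum_{i=1}^{n_0} a(\sigma_i^{(0)}),
\]
which is the statement of the theorem.

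There is no real obstacle here; the substance lies in Corollary~\ref{1cor:weight-compare-0-q}. The only step needing care is the bookkeeping that the facets split disjointly by dimension into those with dimension $< d$ and those with dimension $= d$. One should also note that no facet is counted twice: a facet of dimension $d-1$ or lower appears only in the second sum, while every element of $S_d$ appears only in the first.
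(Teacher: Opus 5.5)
Your proof is correct and follows the paper's route: apply Corollary~\ref{1cor:weight-compare-0-q} with $q=\dim(\Delta)$, then use the fact that every facet has dimension at most $q$. You also check explicitly that $S_{\dim(\Delta)}$ is exactly the set of top-dimensional facets, and you treat $\dim(\Delta)=0$ separately (the corollary, stated for $1\le q\le \dim(\Delta)$, does not cover it); both are details the paper's proof leaves implicit.
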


\begin{proof}
Let $q = dim(\Delta)$ in Corollary~\ref{1cor:weight-compare-0-q}. Since there are no simplices of dimension greater than $q$, every facet of $\Delta$ has dimension less than or equal to $q$.

Therefore, the sum
\[
\sum_{\substack{\sigma \text{ is a facet of } \Delta \\ dim(\sigma) \le q}} a(\sigma)
\]
is exactly the sum of the weights of all facets of $\Delta$.

Substituting this into the statement of Corollary~\ref{1cor:weight-compare-0-q}, we obtain
\[
\sum_{\substack{\sigma \text{ is a facet of } \Delta}} a(\sigma)
=
\sum_{i=1}^{n_0} a(\sigma_i^{(0)}),
\]
which completes the proof.
\end{proof}

The weight distribution over a simplicial complex implies that the total weight of all facets is equal to the total weight of the $0$-simplices. This reflects the interpretation that weight is initially assigned to vertices and is subsequently distributed across facets as higher-dimensional simplices are formed.

A CM in which every simplex carries a concentration value is called a weighted CM. In what follows, CM will always denote a weighted CM, unless explicitly stated otherwise.

\subsection{Inference of higher-order simplices}

Within the CM framework, we introduce a boundary-based deterministic rule for admitting higher-order simplices into the complex. We assume that the realization of a higher-order simplex is supported by sufficiently large boundary weights.
For $q \ge 2$, the inclusion of a combinatorially feasible $q$-simplex $\sigma^{(q)}$ is determined solely from the weights of its boundary $(q-1)$-simplices.

Let
\[
E_q = \{ \sigma^{(q)} \in \mathcal{P}(V) 
\mid \mathcal{B}(\sigma^{(q)}) \subseteq S_{q-1} \}
\]
denote the set of all combinatorially feasible $q$-simplices whose boundary faces are already present in $S_{q-1}$.

\begin{definition}[Inclusion rule]
Let $2 \leq q \leq dim(\Delta)$. An inclusion rule at dimension $q$ is a map
\[
\Phi_q : E_q \to \{0,1\}
\]
such that the family of $q$-simplices admitted in $\Delta$ is defined by
\[
S_q := \{ \sigma^{(q)} \in E_q \mid \Phi_q(\sigma^{(q)}) = 1 \}.
\]
\end{definition}

For $\sigma^{(q)} \in E_q$, define the aggregated boundary weight
\[
W_q(\sigma^{(q)}) 
= \sum_{\tau \in \mathcal{B}(\sigma^{(q)})} a(\tau).
\]

At dimension $q$, define the reference weight level of $(q-1)$-simplices by
\[
\bar{a}_{q-1} 
:= \frac{1}{|S_{q-1}|} 
\sum_{\tau \in S_{q-1}} a(\tau).
\]

The dimension-dependent threshold is then defined as
\[
\theta_q := (q+1)\,\bar{a}_{q-1}
= \frac{q+1}{|S_{q-1}|}
\sum_{\tau \in S_{q-1}} a(\tau).
\]

We define a specific weight-based inclusion rule by
\[
\Phi_q(\sigma^{(q)}) = 1 
\iff
W_q(\sigma^{(q)}) > \theta_q.
\]

The family of admitted $q$-simplices is therefore
\[
S_q 
= \{ \sigma^{(q)} \in E_q 
\mid W_q(\sigma^{(q)}) > \theta_q \}.
\]

This defines a weight-based CM model in which higher-order simplices are introduced through a deterministic threshold criterion.

In the clique complex construction, the existence of a $q$-simplex is determined purely combinatorially: a simplex is present if and only if the vertices of the simplex induce a complete subgraph of the $1$-skeleton of $\Delta$.

In the CM, this combinatorial condition is retained but refined using weight information. While the presence of all boundary $(q-1)$-simplices ensures that the simplex is combinatorially admissible, an additional structural constraint is imposed: a $q$-simplex is included only if its aggregated boundary weight exceeds the prescribed threshold $\theta_q$.

The threshold $\theta_q$ can be interpreted in terms of the average weight of $(q-1)$-simplices. The quantity $\bar{a}_{q-1}$ denotes the average weight of all $(q-1)$-simplices currently present in the simplicial complex. Since a $q$-simplex has exactly $(q+1)$ boundary $(q-1)$-faces, the value $(q+1)\bar{a}_{q-1}$ represents the total weight obtained when each boundary face carries the average weight. The threshold therefore serves as a reference level for evaluating the collective boundary support of a candidate simplex. A $q$-simplex is included only when its aggregated boundary weight exceeds this thresold.

Consequently, in the CM the deterministic threshold criterion filters out purely combinatorial closures and includes only those simplices whose boundary weight exceeds the prescribed threshold.

\section{Centrality indices in the combinatorial metaplex}\label{1sec:centralities in CM}
Having established the axiomatic framework of the CM, defined by a concentration map on the vertex set and its consistent extension to higher-dimensional simplices, we now turn to the quantitative characterisation of its combinatorial structure. The concentration map fundamentally modifies the induced adjacency structure on the simplicial complex, necessitating the definition of centrality indices adapted to the weighted combinatorial framework.

\subsection{Background: centrality in weighted graphs}

To motivate the definition of centrality in a CM, we recall the development of centrality indices in graph theory, from unweighted graphs to fully weighted graphs.
For the graph-based centrality definitions that follow, 
we reindex the vertex set as 
$V = \{v_1, v_2, \ldots, v_n\}$ 
so that the indexing of vertices aligns naturally with the 
entries of the adjacency matrix.

\begin{itemize}
\item[{(a)}]{ Unweighted graphs:}
Let $G = (V, E)$ be a simple graph and $V=\{v_1,v_2,\cdots,v_n\}$ (consider this vertex set only for graph centralities). Two distinct vertices $v_i, v_j \in V$ are said to be adjacent if and only if the unordered pair $\{v_i, v_j\}$ is an element of the edge set $E$:
\[
v_i \sim v_j \iff \{v_i, v_j\} \in E.
\]
The primary centrality index is the {degree}, defined as the cardinality of the neighbourhood of a vertex (that is, the set containing adjacent vertices). In matrix form, the degree of $v_i$ is given by the row-sum of the adjacency matrix \cite{estrada2015first}:

\begin{equation}\label{1eq:degre}
k(v_i) = \sum_{j=1}^n a_{ij},
\end{equation}
where $A = [a_{ij}]_{n \times n}$ is the binary adjacency matrix with $a_{ij} = 1$ if $\{v_i, v_j\} \in E$ and $a_{ij} = 0$ otherwise.

\item[(b)] {Edge-weighted graphs:}
Let $G = (V, E,w)$ be a graph equipped with an edge-weight function $w : E \to \mathbb{R}$. The degree centrality is generalised to the {vertex strength}, defined as the sum of the weights of edges incident to a vertex \cite{barrat2004architecture}:
\begin{equation}\label{1eq:weighted-degre}
s(v_i) = \sum_{j=1}^n w_{ij},
\end{equation}
where $W = [w_{ij}]_{n \times n}$ is the weighted adjacency matrix, with $w_{ij} = w(\{v_i, v_j\})$ if $a_{ij} = 1$ and $w_{ij} = 0$ otherwise. In this formulation, vertices are treated as unweighted elements.

\item[(c)] {Vertex-weighted graphs:}
To incorporate heterogeneity at the level of vertices, centrality definitions have been extended to the class of vertex-weighted graphs $G = (V, E, W)$, where a weight function $W : V \to \mathbb{R}$ assigns a non-negative real value to each vertex \cite{singh2020node}. In this framework, the centrality of a vertex $v_i$ depends on the weights of its neighbours:
\begin{equation}
C(v_i) = \sum_{j=1}^n W(v_j)\, a_{ij},
\end{equation}
where $A = [a_{ij}]_{n \times n}$ is the binary adjacency matrix.

\item [(d)] {Fully weighted graphs:}
The most general graph and direct precursor to the CM framework is the fully weighted graph $G=(V,E,W,w)$, in which both vertices and edges are weighted by functions $W : V \to \mathbb{R}$ and $w : E \to \mathbb{R}$. The degree centrality in this case couples vertex weights with edge weights:
\begin{equation}\label{1eq:fully-weig-degre}
C_f(v_i) = \sum_{j=1}^n W(v_j)\, w_{ij},
\end{equation}
where $A_w = [w_{ij}]_{n \times n}$ is the weighted adjacency matrix.

This formulation ensures consistency with the preceding special cases. Imposing edge homogeneity, that is, $w(e) = 1$ for all $e \in E$, recovers the degree centrality of vertex-weighted graphs. Imposing vertex homogeneity, that is $W(v) = 1$ for all $v \in V$, produces the formulation for edge-weighted graphs. Simultaneously imposing both conditions reduces the index to the classical degree centrality of unweighted graphs.
\end{itemize}

In the succeeding section, we extend the fully weighted graph centrality model to the CM by replacing scalar vertex weight with simplex weight and edge weight with facet-induced adjacency strength.

\subsection{Adjacency and degree in the CM}

In this section, we introduce a family of centrality maps defined on the set of $q$-simplices. Each centrality map assigns to a simplex a real value determined jointly by its incidence structure and the weight map, and satisfies structural properties compatible with the CM framework. These maps induce a ranking on simplices and provide a quantitative characterisation of their structural importance within the CM, thereby distinguishing the CM from purely topological simplicial complexes.

We focus on centrality indices derived from the incidence structure (degree) and from the shortest-path distance (closeness and harmonic). In the CM framework, these indices are defined with respect to the weighted adjacency relations induced by the concentration map on $\mathcal{K}_c$. We now formalise this construction.

\subsubsection{Facet-induced adjacency:}

Let $\mathcal{F}(\mathcal{K}_c)$ denote the set of facets of the CM,
\[
\mathcal{F}(\mathcal{K}_c) =
\left\{
\gamma \in \Delta \;\middle|\; \nexists \tau \in \Delta  \text{ such that } \gamma \subsetneq \tau
\right\},
\]
and let the set of $q$-simplices be
\[
S_q(\mathcal{K}_c) = \{ \sigma \in \Delta \mid \dim(\sigma) = q \}.
\]

Henceforth, $\sigma$ denotes a $q$-simplex unless otherwise stated. We formalise adjacency between simplices in terms of their joint containment in a facet. Two distinct $q$-simplices $\sigma_i, \sigma_j \in S_q(\mathcal{K}_c)$ are said to be adjacent if they are both contained in a common facet. Formally, we define the adjacency relation by
\[
\sigma_i \sim_F \sigma_j
\iff
\exists \gamma \in \mathcal{F}(\mathcal{K}_c)
\text{ such that }
\sigma_i \subsetneq \gamma
\text{ and }
\sigma_j \subsetneq \gamma.
\]
This definition implies that the adjacency between simplices of dimension $q$ is mediated by a facet containing both \cite{raj2025study}.

\begin{definition}
The {strength of adjacency} between two $q$-simplices $\sigma_i$ and $\sigma_j$ is defined by
\[
S(\sigma_i, \sigma_j)
=
\max \left\{
a(\gamma)
\;\middle|\;
\gamma \in \mathcal{F}(\mathcal{K}_c),\;
\sigma_i \cup \sigma_j \subseteq \gamma
\right\}.
\]
We set $S(\sigma_i, \sigma_j) = 0$ if $\sigma_i$ and $\sigma_j$ are not adjacent or if $i = j$.
\end{definition}

The strength of adjacency assigns to each adjacent pair the weight of a mediating facet of maximal concentration, thereby providing a quantitative measure of facet-supported coupling between simplices.

We represent adjacency at dimension $q$ by matrices of size $n_q \times n_q$. In a simplicial complex, adjacency matrices at level $q$ based on upper adjacency were introduced in \cite{estrada2018}, here we define facet-induced adjacency.

\begin{definition}
The {simplicial adjacency matrix} at level $q$ is denoted
\[
A^{(q)} = [a_{ij}^{(q)}],
\]
where
\[
a_{ij}^{(q)} =
\begin{cases}
1, & \text{if } i \neq j \text{ and } \sigma_i \sim_F \sigma_j, \\
0, & \text{otherwise}.
\end{cases}
\]
\end{definition}

When $q=0$ and we restrict to the $1$-skeleton of the simplicial complex, we recover the adjacency matrix defined for the unweighted graph. Inspired by the weighted simplicial adjacency matrix in \cite{raj2025study}, we define a facet-induced weighted adjacency matrix as follows.
\begin{definition}
The {weighted adjacency matrix} at level $q$ is denoted
\[
A_w^{(q)} = [w_{ij}^{(q)}],
\]
where
\[
w_{ij}^{(q)} =
\begin{cases}
S(\sigma_i, \sigma_j), & \text{if } i \neq j \text{ and } \sigma_i \sim_F \sigma_j, \\
0, & \text{otherwise}.
\end{cases}
\]
\end{definition}

When $q=0$ and we restrict to the $1$-skeleton of the simplicial complex, we recover the weighted adjacency matrix of a weighted graph. The matrix $A^{(q)}$ encodes the binary adjacency structure of the simplicial complex, while $A_w^{(q)}$ encodes the corresponding weighted adjacency determined by the weight map $a$. Together, these matrices distinguish the unweighted combinatorial structure from its weighted counterpart within the CM framework.

Before introducing degree centrality, we first formalise the definition of centrality at dimension $q$ within the combinatorial metaplex framework.

\begin{definition}
Let $\mathcal{K}_c$ be a CM, and let $S_q$ denote the set of all $q$-simplices of $\Delta$. A real-valued function $C$, defined as \(C : S_q \to \mathbb{R}\), is said to be a centrality index on $\mathcal{K}_c$ at level $q$ if it induces at least a semi-order on the elements of $S_q$.

\noindent That is, if $\sigma_i, \sigma_j, \sigma_k \in S_q$ and $\le$ is the total order relation on $\mathbb{R}$, then:
\begin{enumerate}
    \item[(i)] Reflexivity: $C(\sigma_i) \le C(\sigma_i),\ \forall \sigma_i \in S_q$.
    \item[(ii)] Transitivity: if $C(\sigma_i) \le C(\sigma_j)$ and $C(\sigma_j) \le C(\sigma_k)$, then $C(\sigma_i) \le C(\sigma_k)$.
    \item[(iii)] Anti-symmetry: if $C(\sigma_i) \le C(\sigma_j)$ and $C(\sigma_j) \le C(\sigma_i)$, then $C(\sigma_i) = C(\sigma_j)$.
    \item[(iv)] Comparability: either $C(\sigma_i) \le C(\sigma_j)$ or $C(\sigma_j) \le C(\sigma_i)$.
\end{enumerate}
\end{definition}

We define degree centrality based on facet-induced adjacency, incorporating both the number of adjacent simplices and the strength of their coupling.

\medskip

\noindent (a) Simplicial degree $(k(\sigma_i))$ \\
The simplicial degree of a $q$-simplex $\sigma_i$ is defined as the cardinality of the set of $q$-simplices adjacent to it. Analytically, it is given by the row sum of the simplicial adjacency matrix:
\[
k(\sigma_i) = \sum_{j=1}^{n_q} a_{ij}^{(q)}.
\]
This quantity depends only on the incidence structure of the simplicial complex and is independent of the weight map.

\medskip

\noindent (b) Weighted degree $(D(\sigma_i))$ \\
Following the formulation in \cite{raj2025study} and the fully weighted graph degree in \eqref{1eq:fully-weig-degre}, the weighted degree of a $q$-simplex $\sigma_i$ is defined by
\[
D(\sigma_i)
=
\sum_{j=1}^{n_q}
a(\sigma_j)\, w_{ij}^{(q)}.
\]
This quantity measures the aggregate contribution of adjacent simplices, weighted by both their simplex weights and the strengths of the facet-mediated adjacencies.

If $\dim(\Delta) = 1$ and $q = 0$, then the weighted degree definition reduces to the degree centrality of a vertex in a fully weighted graph, as given in \eqref{1eq:fully-weig-degre}.
Just as centrality in graphs can be expressed as a linear operator acting on a vertex-weight vector, the CM framework admits a higher-order generalisation in which concentration-induced adjacency operators act on weight functions defined over simplices.

\medskip

\noindent (c) Combined degree centrality $(D^{\alpha}(\sigma_i))$ \\
To balance the combinatorial contribution of adjacency cardinality with the contribution of adjacency strength, we introduce a one-parameter family of degree centralities.

Let $\alpha \in [0,1]$. The combined degree centrality of a $q$-simplex $\sigma_i \in S_q(\mathcal{K}_c)$ is defined by
\[
D^{\alpha}(\sigma_i)
=
\bigl(k(\sigma_i)\bigr)^{1-\alpha}
\cdot
\bigl(D(\sigma_i)\bigr)^{\alpha}.
\]

The parameter $\alpha$ regulates the relative contribution of adjacency cardinality and facet-induced coupling strength:
\begin{itemize}
    \item[(i)] {Topological limit}: ($\alpha = 0$): $D^{\alpha}(\sigma_i) = k(\sigma_i)$.
    \item[(ii)] {Weighted limit}: ($\alpha = 1$): $D^{\alpha}(\sigma_i) = D(\sigma_i)$.
    \item[(iii)] {Intermediate regime}: ($0 < \alpha < 1$): $D^{\alpha}(\sigma_i)$ interpolates between the two.
\end{itemize}

This formulation defines a continuous family of centrality indices on the CM, allowing simplices to be compared with respect to both their combinatorial adjacency and the concentration structure of the facets that support those adjacencies \cite{wei2012degree, yustiawan2015degree}.

\subsection{Weighted walks and distances}

In the CM, connectivity between simplices is determined not only by facet-induced adjacency, but also by sequences of facets that mediate successive adjacencies. Unlike in graphs, where an edge uniquely determines a transition between two vertices, a pair of simplices in a simplicial complex may be jointly contained in multiple facets. Consequently, a walk between simplices is specified by both the sequence of simplices and the choice of mediating facets at each step. We define the weighted walk following the definition in \cite{raj2025study}.

\begin{definition}
Let $\sigma_{\mathrm{start}}, \sigma_{\mathrm{end}} \in S_q(\mathcal{K}_c)$. A {weighted walk} $W$ from $\sigma_{\mathrm{start}}$ to $\sigma_{\mathrm{end}}$ is an alternating sequence of $q$-simplices and facets,
\[
W:\quad \sigma_{i_1},\; \gamma_1,\; \sigma_{i_2},\; \gamma_2,\; \dots,\; \gamma_n,\; \sigma_{i_{n+1}},
\]
such that:
\begin{enumerate}
    \item[(i)] $\sigma_{i_1} = \sigma_{\mathrm{start}}$ and $\sigma_{i_{n+1}} = \sigma_{\mathrm{end}}$.
    \item[(ii)] For each $1 \leq l \leq n$, the facet $\gamma_l \in \mathcal{F}(\mathcal{K}_c)$ satisfies
    \[
    \sigma_{i_l} \subsetneq \gamma_l
    \quad \text{and} \quad
    \sigma_{i_{l+1}} \subsetneq \gamma_l.
    \]
    \item[(iii)] For each $1 \leq l \leq n$, the mediating facet $\gamma_l$ satisfies
    \[
    a(\gamma_l) = S(\sigma_{i_l}, \sigma_{i_{l+1}}).
    \]
\end{enumerate}
\end{definition}

This definition ensures that each consecutive pair of simplices in the walk is connected by a facet of maximal weight among all admissible mediators. Two simplices are said to be {connected} if there exists a weighted walk between them.

Let $W$ be a weighted walk as defined above. The {length} of $W$ is given by
\[
\mathcal{L}(W)
=
\sum_{l=1}^{n}
\frac{1}{a(\gamma_l)\, a(\sigma_{i_{l+1}})}.
\]

This definition reflects the modelling assumption that higher weights of mediating facets and destination simplices indicate stronger combinatorial influence in the formation of higher-order structure. Accordingly, the contribution of each step in the walk is taken to be inversely proportional to the product $a(\gamma_l)\, a(\sigma_{i_{l+1}})$. Under this choice, steps involving larger weights contribute smaller increments to the total length.

Consequently, walks that pass through highly concentrated facets and simplices have smaller total length, ensuring that shortest paths preferentially follow sequences supported by higher concentration. This definition is therefore consistent with the principle that greater boundary concentration facilitates higher-order interaction.

\begin{definition}
The shortest path distance between two simplices $\sigma_i, \sigma_j \in S_q(\mathcal{K}_c)$ is defined by
\[
d(\sigma_i, \sigma_j)
=
\min_{W} \mathcal{L}(W),
\]
where the minimum is taken over all weighted walks $W$ connecting $\sigma_i$ and $\sigma_j$.
\end{definition}

Because walk length is defined as a sum of strictly positive step contributions, the induced shortest-path function satisfies non-negativity and the triangle inequality.
Moreover, $d(\sigma_i,\sigma_j)=0$ if and only if $\sigma_i=\sigma_j$. 
However, because the step length depends on the destination simplex weight, the resulting distance is not symmetric. 


\begin{definition}
Let $\alpha \in [0,1]$. The {combined shortest path distance} between $\sigma_i, \sigma_j \in S_q(\mathcal{K}_c)$ is defined by
\[
d^{\alpha}(\sigma_i, \sigma_j)
=
\min_{W}
\left\{
\sum_{l=1}^{n}
\left(
\frac{1}{w^{(q)}_{i_l i_{l+1}} \, a(\sigma_{i_{l+1}})}
\right)^{\alpha}
\right\},
\]
where
\[
w^{(q)}_{i_l i_{l+1}} = S(\sigma_{i_l}, \sigma_{i_{l+1}}) = a(\gamma_l).
\]
\end{definition}

The parameter $\alpha$ regulates the relative contribution of facet weights and simplex weights in the computation of path length:
\begin{enumerate}
    \item[(i)] Weighted limit ($\alpha = 1$): $d^{\alpha}$ coincides with the distance induced by the length $\mathcal{L}(W)$.
    \item[(ii)] Intermediate regime ($0 < \alpha < 1$): $d^{\alpha}$ interpolates between purely combinatorial and weight-dominated distances.
    \item[(iii)] Topological limit ($\alpha = 0$): $d^{\alpha}$ reduces to the minimum number of steps in a facet-mediated walk.
\end{enumerate}

\subsection{Weighted closeness centrality}

We consider the adjacency structure on $S_q(\mathcal{K}_c)$ induced by facet-mediated weighted walks.

\begin{definition}
A connected component of $S_q(\mathcal{K}_c)$ is a maximal subset 
$\mathcal{D} \subseteq S_q(\mathcal{K}_c)$ such that for every 
$\sigma_i, \sigma_j \in \mathcal{D}$ there exists a finite weighted walk joining them.
\end{definition}

Closeness centrality quantifies how close a simplex is, on average, to all other simplices within its connected component, with respect to the shortest-path distance induced by weighted walks.

Let $\sigma_i \in S_q(\mathcal{K}_c)$ and denote by $\mathcal{D}(\sigma_i)$ the connected component containing $\sigma_i$. The {farness} of $\sigma_i$ is defined as
\[
F(\sigma_i)
=
\sum_{\substack{\sigma_j \in \mathcal{D}(\sigma_i) \\ \sigma_j \neq \sigma_i}}
d(\sigma_i,\sigma_j).
\]

\begin{definition}
Let $\sigma_i \in S_q(\mathcal{K}_c)$. 
The weighted closeness centrality (CC) of $\sigma_i$ is defined by
\[
CC(\sigma_i)
=
\frac{1}{\displaystyle 
\sum_{\substack{\sigma_j \in \mathcal{D}(\sigma_i) \\ \sigma_j \neq \sigma_i}}
d(\sigma_i,\sigma_j)}.
\]
\end{definition}

Thus, simplices with smaller aggregate distance to the remaining simplices in their component receive larger centrality values.

In disconnected settings, the reciprocal formulation above becomes degenerate when distances outside the component are infinite. A standard alternative is the harmonic formulation, which naturally accommodates disconnected structures.

\begin{definition}
Let $\sigma_i \in S_q(\mathcal{K}_c)$. 
The weighted harmonic centrality (HC) of $\sigma_i$ is defined by
\[
HC(\sigma_i)
=
\sum_{\substack{\sigma_j \in S_q(\mathcal{K}_c) \\ \sigma_j \neq \sigma_i}}
\frac{1}{d(\sigma_i,\sigma_j)},
\]
where terms with $d(\sigma_i,\sigma_j)=\infty$ are taken to contribute $0$.
\end{definition}

We now introduce a parameter-dependent version based on the combined weighted distance $d^{\alpha}$.

\begin{definition}
Let $\alpha \in [0,1]$ and $\sigma_i \in S_q(\mathcal{K}_c)$. 
The combined weighted closeness centrality of $\sigma_i$ is defined by
\[
CC^{\alpha}(\sigma_i)
=
\frac{1}{\displaystyle 
\sum_{\substack{\sigma_j \in \mathcal{D}(\sigma_i) \\ \sigma_j \neq \sigma_i}}
d^{\alpha}(\sigma_i,\sigma_j)}.
\]
\end{definition}

Analogously, the combined harmonic centrality is defined by
\[
HC^{\alpha}(\sigma_i)
=
\sum_{\substack{\sigma_j \in S_q(\mathcal{K}_c) \\ \sigma_j \neq \sigma_i}}
\frac{1}{d^{\alpha}(\sigma_i,\sigma_j)},
\]
with infinite distances contributing $0$.

The parameter $\alpha$ regulates the relative influence of step count and facet-mediated transition strength in the distance evaluation. 
When $\alpha = 0$, all transitions contribute equally and the metric reduces to the minimal number of facet-mediated steps. 
When $\alpha = 1$, distances are determined entirely by the weights assigned to simplices and their mediating facets along the walk.
For $0 < \alpha < 1$, the resulting centrality interpolates between the step-based regime and the fully weight-driven regime.

We note that betweenness centrality is not defined in the present framework. The classical definition of betweenness relies on symmetric shortest-path distances. In this work, the length of a weighted walk is asymmetric, and consequently the induced distance function does not satisfy symmetry. A consistent extension of classical betweenness centrality is therefore not immediate, and the development of an appropriate asymmetric analogue remains a direction for future investigation.

\section{Illustration and results}\label{1sec:Illustration of CM}
In this section, we illustrate the CM framework developed in Section~\ref{1sec:CM framework} using a real-world network. Specifically, the CM is constructed from pairwise data obtained from the food web dataset available in the cheddar R package \cite{hudson2013cheddar}. We consider the food web of Tuesday Lake, Michigan, United States of America, whose pairwise network consists of 50 vertices and 263 edges.

In this network, the discrete structure assigned to each vertex represents the collection of all individuals belonging to the corresponding species, while the weight denotes the total biomass of that species, measured in units of $mg\,m^{-3}$. Based on these vertex weights, we define the fractional contribution map, under which the weight of a $(q-1)$-simplex is distributed equally among all its incident $q$-simplices for $1 \leq q \leq 3$. Using the threshold criterion introduced earlier, we identify the HoIs included in the CM, which we refer to as true HoIs in this work.

The clique complex of this network contains 50 $0$-simplices, 263 $1$-simplices, 170 $2$-simplices, 113 $3$-simplices, 49 $4$-simplices, 13 $5$-simplices and 2 $6$-simplices. Table~1 presents a comparison between the number of HoIs obtained from the clique complex and true HoIs obtained from the CM.

\begin{table}[h!]
\centering
\setlength{\tabcolsep}{6pt}
\renewcommand{\arraystretch}{1.20}
\begin{tabular}{|p{2.5cm}|p{3.5cm}|p{3cm}|}
\hline
\textbf{Dimension of a simplex} & \textbf{No. of simplices in the clique complex} & \textbf{No. of simplices in the CM} \\
\hline
0-simplices & 50  & 50  \\
\hline
1-simplices & 263 & 263 \\
\hline
2-simplices & 170 & 89  \\
\hline
3-simplices & 113 & 5   \\
\hline
4-simplices & 49  & 0   \\
\hline
5-simplices & 13  & 0   \\
\hline
6-simplices & 2   & 0   \\
\hline
\textbf{Total} & \textbf{610} & \textbf{407} \\
\hline
\end{tabular}
\caption{\footnotesize Comparison of the number of simplices across dimensions in the clique complex and the CM, highlighting that the CM identifies true HoIs.}
\end{table}

\begin{figure}[h!]
	\centering
	
	\begin{subfigure}{0.48\textwidth}
		\centering
		\includegraphics[width=\linewidth]{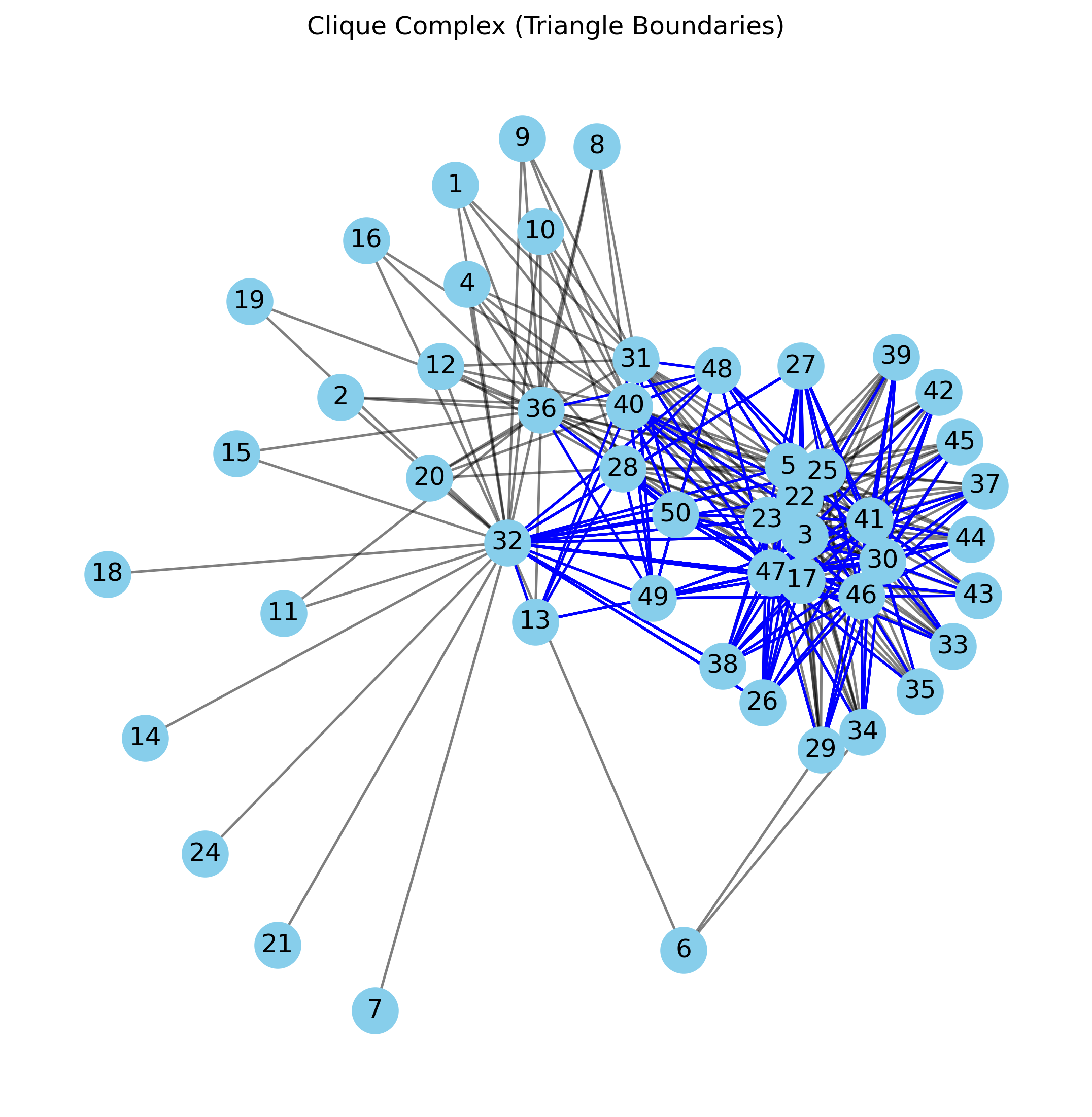}
		\caption{Clique complex}
	\end{subfigure}
	\hfill
	\begin{subfigure}{0.48\textwidth}
		\centering
		\includegraphics[width=\linewidth]{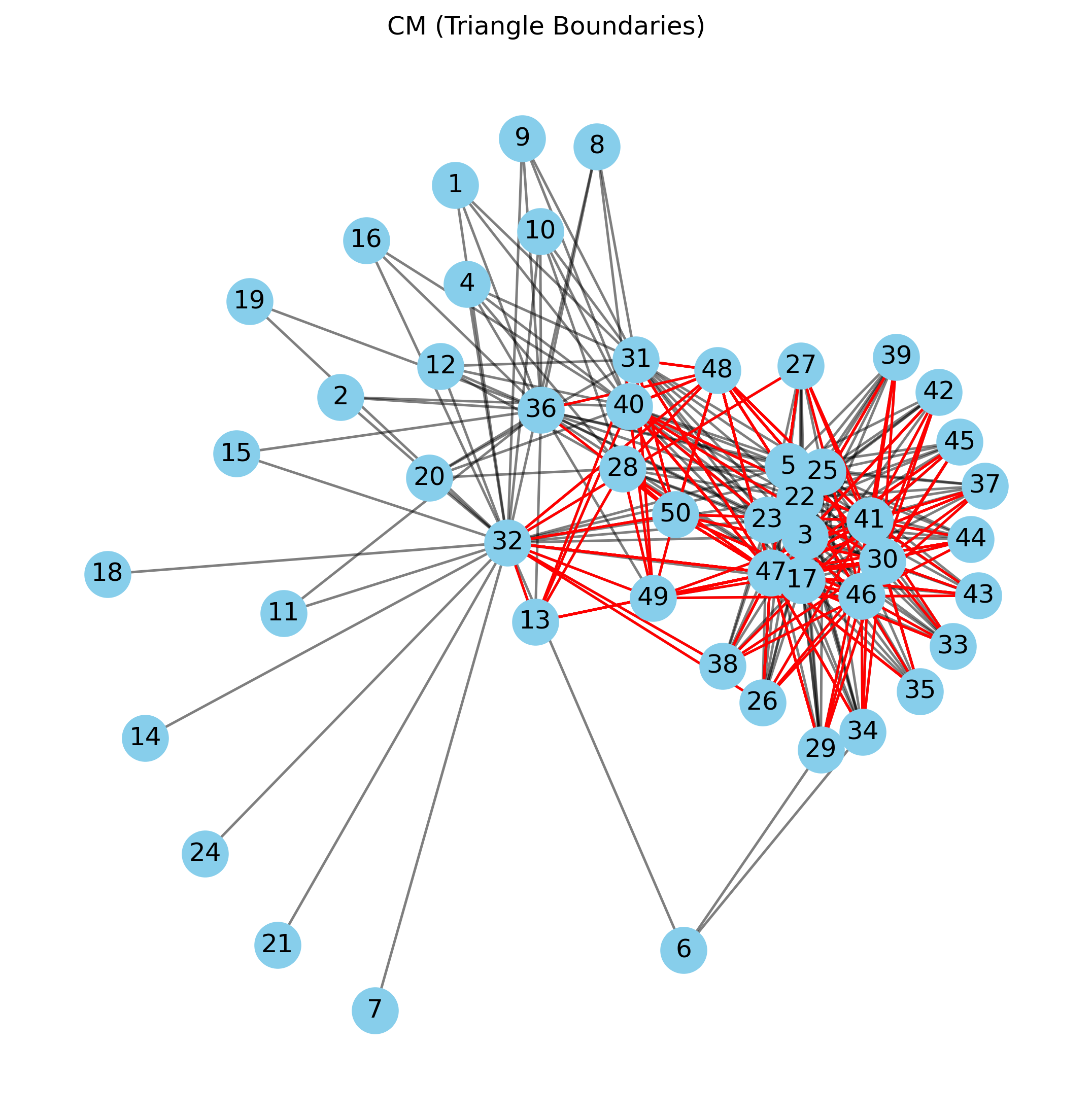}
		\caption{CM}
	\end{subfigure}
	
	\caption{\footnotesize Comparison of the 2-skeletons of the clique complex and the CM. The boundaries of triangles in the clique complex are shown in blue, while the boundaries of true triangles in the CM are shown in red.}
\end{figure}

\begin{table}[h!]
\centering
\setlength{\tabcolsep}{6pt}
\renewcommand{\arraystretch}{1.2}

\begin{tabular}{|p{2.5cm}|p{2cm}|p{2.7cm}|p{2.4cm}|p{2.8cm}|}
\hline
\textbf{Dimension  of a simplex} 
& \textbf{No.\ of facets} 
& \textbf{Sum of facet weights} 
& \textbf{No.\ of non-facets} 
& \textbf{Sum of non-facet weights} \\
\hline
0-simplices & 0   & 0            & 50  & 1171.372469 \\
\hline
1-simplices & 162 & 412.376647   & 101 & 758.9958 \\
\hline
2-simplices & 72  & 576.776620   & 17  & 158.8958 \\
\hline
3-simplices & 5   & 182.219202   & 0   & 0 \\
\hline
\end{tabular}
\caption{\footnotesize Summary of facet and non-facet simplices counts and their total weights across dimensions, verifying Theorem 3.5 by showing that the sum of vertex weights equals the sum of facet weights.}
\end{table}

\begin{table}[h!]
\centering
\setlength{\tabcolsep}{6pt}
\renewcommand{\arraystretch}{1.10}

\begin{tabular}{|c|p{1.6cm}|p{2.2cm}|p{2.3cm}|p{2.2cm}|}
\hline
\textbf{Edge} 
& \textbf{Edge weight} 
& \textbf{Simplicial degree: $k$ ($\alpha = 0$)} 
& \textbf{Combined degree: $D^\alpha$ ($\alpha = 0.5$)}
& \textbf{Weighted degree: $D^1$ ($\alpha = 1$)} \\
\hline

(47,48) & 34.8970 & 21 & 466.2371 & 10351.2867 \\
\hline
(48,50) & 21.4450 & 21 & 418.8909 & 8355.6944 \\
\hline
(47,50) & 16.5480 & 21 & 403.2099 & 7741.8187 \\
\hline
(41,47) & 17.6106 & 40 & 506.3649 & 6410.1363 \\
\hline
(28,47) & 16.6017 & 9  & 216.0144 & 5184.6920 \\
\hline
(46,47) & 16.6851 & 39 & 441.1313 & 4989.6619 \\
\hline
(28,50) & 3.1497  & 5  & 156.9601 & 4927.2966 \\
\hline
(31,47) & 15.1792 & 9  & 203.2567 & 4590.3667 \\
\hline
(32,47) & 15.3263 & 15 & 261.3505 & 4553.6043 \\
\hline
(31,50) & 1.7272  & 5  & 148.6004 & 4416.4158 \\
\hline

\multicolumn{5}{|c|}{$\vdots$} \\
\hline

(40,49) & 1.6479 & 2 & 19.2811 & 185.8814 \\
\hline
(27,30) & 1.0547 & 2 & 18.8169 & 177.0369 \\
\hline
(30,49) & 2.3517 & 2 & 18.4353 & 169.9292 \\
\hline
(30,38) & 0.8344 & 2 & 18.2916 & 167.2912 \\
\hline
(26,30) & 0.8132 & 2 & 18.2405 & 166.3572 \\
\hline

\end{tabular}
\caption{\footnotesize Top 10 and bottom 5 edges ranked by weighted degree $D^1$ ($\alpha = 1$). The table presents edge weights together with the simplicial degree $k$ ($\alpha = 0$) and the combined degree $D^\alpha$ ($\alpha = 0.5$).}
\end{table}

\begin{table}[h!]
	\centering
	\setlength{\tabcolsep}{6pt}
	\renewcommand{\arraystretch}{1.10}
	\begin{tabular}{|c|c|c|c|c|}
		\hline
		\textbf{Edge} & $\textbf{Edge weight}$ & $\mathbf{CC^{0}}$ & $\mathbf{CC^{0.5}}$ & $\mathbf{CC^{1}}$ \\
		\hline
		
		$(41,47)$ & 17.6106 & 0.006061 & 0.042212 & 0.160132 \\
		\hline
		$(47,48)$ & 34.8970 & 0.005587 & 0.041699 & 0.159768 \\
		\hline
		$(46,47)$ & 16.6851 & 0.006024 & 0.041559 & 0.159610 \\
		\hline
		$(41,48)$ & 22.5076 & 0.004717 & 0.039746 & 0.158834 \\
		\hline
		$(48,50)$ & 21.4450 & 0.004292 & 0.039209 & 0.158776 \\
		\hline
		$(41,46)$ & 4.2957  & 0.004405 & 0.039274 & 0.158700 \\
		\hline
		$(46,48)$ & 21.5821 & 0.004630 & 0.039435 & 0.158689 \\
		\hline
		$(47,50)$ & 16.5480 & 0.005587 & 0.039007 & 0.158659 \\
		\hline
		$(28,47)$ & 16.6017 & 0.004000 & 0.038666 & 0.158603 \\
		\hline
		$(31,50)$ & 1.7272  & 0.003802 & 0.038249 & 0.158565 \\
		\hline
		
		\multicolumn{5}{|c|}{$\vdots$} \\
		\hline
		
		$(40,49)$ & 1.6479 & 0.003636 & 0.028457 & 0.137050 \\
		\hline
		$(27,30)$ & 1.0547 & 0.003846 & 0.028028 & 0.136933 \\
		\hline
		$(26,30)$ & 0.8132 & 0.003846 & 0.027803 & 0.136921 \\
		\hline
		$(30,38)$ & 0.8344 & 0.003846 & 0.027822 & 0.136899 \\
		\hline
		$(30,49)$ & 2.3517 & 0.003968 & 0.027978 & 0.134071 \\
		\hline
		
	\end{tabular}

	\caption{Top 10 and bottom 5 edges ranked according to closeness centrality ($CC^{1}$). The corresponding values for $\alpha = 0, 0.5, 1$, together with the edge weights, are also reported.}
	
\end{table}

\begin{table}[h!]
\centering
\setlength{\tabcolsep}{6pt}
\renewcommand{\arraystretch}{1.15}

\begin{tabular}{|c|c|c|c|c|}
\hline
\textbf{Edge} & $\textbf{Edge weight}$ & $\mathbf{HC^{0}}$ & $\mathbf{HC^{0.5}}$ & $\mathbf{HC^{1}}$ \\
\hline

$(47,48)$ & 34.8970 & 60.5000 & 795.0753 & 13893.3599 \\
\hline
$(47,50)$ & 16.5480 & 60.5000 & 719.8201 & 13658.1812 \\
\hline
$(48,50)$ & 21.4450 & 51.5000 & 728.4055 & 13610.8367 \\
\hline
$(28,50)$ & 3.1497  & 41.1667 & 680.7542 & 13354.9942 \\
\hline
$(28,47)$ & 16.6017 & 44.6667 & 688.3476 & 13024.0892 \\
\hline
$(31,50)$ & 1.7272  & 41.1667 & 669.2698 & 12828.6989 \\
\hline
$(28,48)$ & 21.4987 & 41.1667 & 661.6855 & 12540.4101 \\
\hline
$(31,47)$ & 15.1792 & 44.6667 & 673.5648 & 12501.1194 \\
\hline
$(32,50)$ & 1.8743  & 42.1667 & 662.8607 & 12417.5455 \\
\hline
$(32,47)$ & 15.3263 & 50.1667 & 675.0535 & 12130.6257 \\
\hline

\multicolumn{5}{|c|}{$\vdots$} \\
\hline

$(40,49)$ & 1.6479 & 38.3333 & 354.9303 & 3278.5304 \\
\hline
$(27,30)$ & 1.0547 & 41.0000 & 336.8375 & 3044.8693 \\
\hline
$(30,38)$ & 0.8344 & 41.0000 & 332.8572 & 2966.6504 \\
\hline
$(26,30)$ & 0.8132 & 41.0000 & 332.4688 & 2958.9547 \\
\hline
$(30,49)$ & 2.3517 & 42.1667 & 335.1990 & 2921.9739 \\
\hline

\end{tabular}

\caption{Top 10 and bottom 5 edges ranked according to harmonic centrality ($HC^{1}$). The corresponding values for $\alpha = 0, 0.5, 1$, together with the edge weights, are also reported.}
\end{table}

Figure~1 illustrates the 2-skeletons of the clique complex and the CM for comparison: blue lines indicate the boundaries of triangles present in the clique complex, whereas red lines indicate the boundaries of true triangles present in the CM.

Table 2 verifies Theorem \ref{1thm:facet-vertex-weight} by reporting the number and total weights of facets and non-facet simplices across dimensions. The results confirm that the total weight of the vertices equals the total weight of the facets.

The analysis of the data using degree distributions of edges is shown in Table~3. It shows the top 10 edges having the highest weighted degree, along with their weights, simplicial degree, and combined degree ($\alpha = 0.5$).

Table~4 presents the top ten and bottom five edges with respect to closeness centrality under the weighted walk for $\alpha = 0$, $0.5$, and $1$. The edges are ranked according to the values obtained for $\alpha = 1$, and their corresponding weights are also reported. The shortest-path distances are computed using Dijkstra's algorithm.

Table~5 presents the top ten and bottom five edges with respect to harmonic centrality across the same range of $\alpha$ values. The edges are ranked according to the values obtained for $\alpha = 1$, and their corresponding weights are also reported.

We observe from Tables~3, 4, and 5 that among the top 10 edges, four edges are common across all three centrality indices, while the remaining edges vary. However, the last five edges are the same in all three centralities, with only a change in their order.

The following observations could be made from the above discussions:

\begin{enumerate}
    \item[(i)] A comparison between the simplices arising from the clique complex and those constructed under the CM shows that the CM yields a more precise and structurally meaningful representation for the inference of higher-order interactions (HoIs).

    \item[(ii)] The weight distribution across dimensions verifies the conservation principle established in the theorem, namely that the total weight assigned to vertices coincides exactly with the total weight carried by the facets.

    \item[(iii)] The centrality measures computed for different values of $\alpha$ illustrate how the relative importance of simplices shifts as the model transitions from a purely combinatorial regime to a weight-dominated regime.
\end{enumerate}
\section{Conclusion}

In this work, we introduce the combinatorial metaplex as a mathematical framework for modelling higher-order networks, in which interaction structure and intrinsic unit properties are coupled. In contrast to classical clique complex-based models, where HoIs included solely from inclusion relations, the combinatorial metaplex couples vertex-level intrinsic properties directly into the inclusion of HoIs.

A central contribution of the paper is the formal construction of the concentration map presented in Section \ref{1sec:CM framework}. Starting from a vertex concentration assignment map $a:V\to\mathbb{Q}^+$, we extend this assignment to a weight map on the simplicial complex. The extension is defined via fractional contribution maps that distribute the weight of lower-dimensional simplices among the higher-dimensional simplices that contain them. The resulting procedure defines a coherent propagation of intrinsic quantities across the simplicial hierarchy. A key structural result is the conservation relation established in Theorem \ref{1thm:facet-vertex-weight}, which shows that the total weight assigned to the vertices equals the total induced weight of the facets. This result establishes a vertex–facet conservation principle, whereby the total vertex weight is exactly preserved in the induced facet weights.

Within this framework, higher-dimensional simplices are included only when the total weight of their boundary simplices exceeds a prescribed threshold. This rule yields a deterministic procedure for constructing simplicial complexes from weighted vertex data. Unlike clique-based constructions, which include every combinatorially closed simplex, the CM includes only those supported by sufficient boundary weight. This distinction separates the true HoIs captured by the CM from the HoIs that arise solely through combinatorial closure in the clique complex.

Using the induced weights on simplices and the facet-mediated adjacency between simplices, we develop corresponding centrality indices for the $q$-simplices, $0 \le q \le \dim(\Delta)-1$, in Section \ref{1sec:centralities in CM}. In particular, we introduce simplicial and weighted degree centralities, together with a one-parameter family $D^{\alpha}$ that interpolates between the simplicial and weighted degrees. Weighted walks and shortest-path distances between simplices then give rise to the closeness and harmonic centrality indices $CC^{\alpha}$ and $HC^{\alpha}$, which quantify the importance of simplices in the CM.

To illustrate the proposed framework, we construct a CM from the Tuesday Lake food-web dataset using vertex-level biomass values, as described in Section \ref{1sec:Illustration of CM}. This example demonstrates how the threshold rule determines the inclusion of true HoIs and how the concentration map $a$ subsequently induces weight on these simplices. A comparison with the corresponding clique complex shows that the CM identifies a reduced set of HoIs supported by sufficient boundary weight, thereby filtering out simplices that arise solely from combinatorial closure. This highlights the role of intrinsic vertex properties in governing the inclusion and structure of higher-order networks.

The evaluation of the proposed degree, closeness, and harmonic centrality indices in the illustrative example demonstrates how the weight-driven simplicial structure influences the relative importance of simplices in the CM. In particular, the results show that the tuning parameter regulates the balance between the simplicial topology and the vertex weights in determining centrality values. As the parameter varies, the centralities transition between behaviour governed primarily by the simplicial structure and behaviour influenced more strongly by the vertex weights. Together, these observations illustrate how the combinatorial metaplex provides a principled framework for constructing and analysing higher-order networks from weighted vertex data.

\subsection*{Declaration of competing interest}

The authors have no conflicts of interest to declare that are relevant to the content of this article.

\subsection*{AI-Assistance Disclosure}

The authors used an artificial intelligence language model (ChatGPT, developed by OpenAI) for limited assistance in rephrasing, language polishing, and basic grammatical correction of certain sentences. In addition, ChatGPT was used to assist in generating and refining computational code for implementing illustrative examples of the proposed framework using Google Colab. All mathematical results, algorithms, theoretical developments, interpretations, and scientific conclusions presented in this manuscript were entirely authored, verified, and approved by the authors.

\subsection*{Funding}

This research did not receive any specific grant from funding agencies in the public, commercial, or not-for-profit sectors.
\begin{sloppypar}
 \printbibliography   
\end{sloppypar}
\end{document}